\newtheorem{thm}{Theorem}[section]
\newtheorem{cor}[thm]{Corollary}
\newtheorem{lem}[thm]{Lemma}
\theoremstyle{definition}
\theoremstyle{remark}
\newtheorem{rem}[thm]{Remark}
\numberwithin{equation}{section}
\newcommand{\norm}[1]{\left\Vert#1\right\Vert}
\newcommand{\abs}[1]{\left\vert#1\right\vert}
\renewcommand\Re{\operatorname{Re}}
\renewcommand\Im{\operatorname{Im}}
\def\hatgap{2pt}
\def\subdown{-2pt}
\newcommand\reallywidehat[2][]{%
\renewcommand\stackalignment{l}%
\stackon[\hatgap]{#2}{%
\stretchto{%
    \scalerel*[\widthof{$#2$}]{\kern-.6pt\bigwedge\kern-.6pt}%
    {\rule[-\textheight/2]{1ex}{\textheight}}%WIDTH-LIMITED BIG WEDGE
}{0.5ex}% THIS SQUEEZES THE WEDGE TO 0.5ex HEIGHT
_{\smash{\belowbaseline[\subdown]{\scriptstyle#1}}}%
}}
\begin{document}

\title[Fourth order Schrödinger equation]{Dispersion estimates for the boundary integral operator associated with the fourth order Schrödinger equation posed on the half line}%
\author{T. Özsarı$^{\MakeLowercase{a},*}$, K. Alkan$^{\MakeLowercase{b}}$,  and  K. Kalimeris$^{\MakeLowercase{c}}$}
\address{$^a$Department of Mathematics, Bilkent University\\ Çankaya, Ankara, 06800 Turkey}
\address{$^b$Department of Mathematics, Izmir Institute of Technology\\ Urla, Izmir, 35430 Turkey}
\address{$^c$Academy of Athens, Mathematics Research Center, Greece}%

\thanks{*E-mail: turker.ozsari@bilkent.edu.tr}
%\email{kvl.alkan@gmail.com}
%%\author[K. Kalimeris]{K. Kalimeris}
%
%
%\email{kkalimeris@academyofathens.gr}%
%\author[T. {\" O}zsarı]{T. {\" O}zsarı}%
%\address[T. {\" O}zsarı]{Department of Mathematics, Bilkent University, TR}%
%\email{turker.ozsari@bilkent.edu.tr}%
\thanks{KA and T{\" O}'s research are funded by TÜBİTAK's 1001 Grant 117F449. TÖ's work was also partially supported by the Science Academy's Young Scientist Award Program (BAGEP 2020). KK was partially supported by the research programme (200/984) of the Research Committee of the Academy of Athens. This work was completed while TÖ was visiting the Academy of Athens in September of 2021.  }%
%\subjclass{}%
\keywords{Fourth order Schrödinger equation, unified transform method, Fokas method, wellposedness}%

%\date{}%
%\dedicatory{}%
%\commby{}%
% ----------------------------------------------------------------
\begin{abstract}
    In this paper, we prove dispersion estimates for the boundary integral operator associated with the fourth order Schrödinger equation posed on the half line. Proofs of such estimates for domains with boundaries are rare and generally require highly technical approaches, as opposed to our simple treatment which is based on constructing a boundary integral operator of oscillatory nature via the Fokas method. Our method is uniform and can be extended to other higher order partial differential equations where the main equation possibly involves more than one spatial derivatives.
\end{abstract}
\maketitle
\tableofcontents
% ----------------------------------------------------------------
\section{Introduction}
A boundary integral operator (\texttt{BdIntOp}) associated with an initial-boundary value problem (\texttt{IBVP}) is a mapping in the form of an integral formula that takes functions defined on the boundary of the space-time cylinder to solutions of the given \texttt{IBVP}, say with zero initial datum and interior source. Regularity analysis of such operators plays a crucial role in establishing local wellposedness for (nonlinear) \texttt{IBVP}s.  A \texttt{BdIntOp} can be written in abstract or explicit form.  An abstract formula is generally based on the semigroup theory.  However, in order to place an \texttt{IBVP} within the context of semigroup theory, one needs to somehow homogenize the given boundary condition so that the domain of the generator becomes a time independent linear space.    This is generally done by first extending the given boundary input as a solution of a relevant stationary problem and then subtracting it from the original problem.  From the regularity point of view, this approach costs loss of derivatives in wellposedness analysis, and one needs to employ rather advanced techniques to retrieve desired smoothness properties.  On the other hand, there are methods to obtain explicit formulas for  \texttt{BdIntOp}s directly without using an extension-homogenization approach.  One of the most effective choices of such direct methods is the unified transform method (\texttt{UTM}), also known as the Fokas method, see e.g., \cite{F97} and \cite{F08}. This method was recently used to construct \texttt{BdIntOp} for establishing local wellposedness of nonlinear initial boundary value problems, see for instance \cite{F17}, \cite{Him20} and \cite{OY19}.  This method is realised in three main steps: (i) the construction  of a \emph{global relation}, which is an identity that relates some particular integral transforms of known and unknown boundary values and the sought after solution, (ii) the derivation of an \textit{integral representation} of the solution which involves the integral transforms of both the known and the unknown boundary values, (iii) the evaluation of the contribution of the unknown values in the integral representation, with the utilisation of the global relation. This last step requires (a) at the level of the global relation, the identification of the \emph{invariance maps}  which keep spectral inputs of the transforms of boundary values unchanged, and (b) at the level of the integral representation, a subtle \emph{contour deformation}  based on delicate complex analytic arguments.  The space-time structure of \texttt{BdIntOp}s constructed via the \texttt{UTM} allows one to use the tools of Fourier and harmonic analysis, in particular the theory of oscillatory integrals, for proving Strichartz type estimates.  These estimates are essential for establishing the low regularity theory in function spaces.

This paper aims to (i) construct a \texttt{BdIntOp} corresponding to the fourth order Schrödinger equation subject to Dirichlet-Neumann boundary conditions via the \texttt{UTM} and (ii) prove dispersion estimates (that imply Strichartz type estimates) for this \texttt{BdIntOp} with respect to boundary data. More precisely, we consider the following partial differential equation (PDE):
\begin{align}
&y_{t}+Py=0, \quad (x,t)\in\mathbb{R}_+\times (0,T),\label{maineq}\\
&y(x,0)=0,\label{init}\\
&B_jy(0,t)=g_{j}(t)\label{bdry},
\end{align} where $P$, $B_j$, $j=0,1$ are (differential) operators given by $P=-i(\partial_x^4+\partial_x^2)$, $B_0=\gamma_0$ (Dirichlet trace operator), and $B_1=\gamma_1$ (Neumann trace operator). We assume for simplicity that $g_j$ have compact support in $(0,T)$ for $j=0,1$. Note that this in particular implies compatibility at the space time corner point. We will write $y(t)=W_b[g_0,g_1](t)$ for the solution of the above PDE, where $W_b$ denotes the \texttt{BdIntOp} that we will construct by using the \texttt{UTM}.

A representation formula for solution of an easier problem, where $P=-i\partial_x^4$ (without the Laplacian) was recently obtained in a recent work of first author \cite{OY19}.  In that work, the \texttt{BdIntOp} was found via the UTM in the form
 \begin{equation}\label{ysol}
W_b[g_0,g_1](x,t)=\int_{\partial D^+}E(k;x,t)G(k;T)dk,
\end{equation} where $$D^+:=\left\{k\in\mathbb{C}\,|\,\text{arg}\,k\in \bigcup_{\ell=1}^2\left(\frac{(2\ell-1)\pi}{4},\frac{\ell\pi}{2}\right)\right\},$$
$$E=-\frac{1}{2\pi}e^{ikx+ik^4t},$$ $$G(k;t)=-2ik(k+\nu(k))\tilde{g}_1(-ik^4,t)-2k\nu(k)(k+\nu(k))\tilde{g}_0(-ik^4,t)$$ with
$$\nu(k)=\left\{  \begin{array}{ll} ik, & \hbox{$\arg k\in \{\frac{\pi}{4},\frac{\pi}{2}$\};} \\   -ik, & \hbox{$\arg k\in  \{\frac{3\pi}{4},\pi\},$} \end{array}  \right.$$ and
\begin{equation}\label{ytildej}\tilde{g}_j(k,t):=\int_{0}^{t}e^{ks}g_j(s)ds.\end{equation} %Moreover, they proved the Strichartz estimate
%\begin{equation}\label{OY19res}
%  \|W_b[g_0,g_1]\|_{L^\lambda(0,T; W^{s,r}(\mathbb{R}_+))} \lesssim \|g_0\|_{H^{\frac{2s+3}{8}}(0,T)}+\|g_1\|_{H^{\frac{2s+1}{8}}(0,T)},
%\end{equation} for $s\ge 0$ and $(\lambda,r)$ being \emph{biharmonic admissible}, i.e., $\lambda,r\in [2,\infty]$, $\frac{1}{8}=\frac{1}{4r}+\frac{1}{\lambda}$.

There is another study (see \cite{Cap2020}) in which a \texttt{BdIntOp} corresponding to the biharmonic case $P=-i\partial_x^4$ is constructed. In their paper authors use a Riemann–Liouville fractional integral.  This method is well known and was previously used for the Korteweg-de Vries (KdV) equation by Colliander and Kenig \cite{Col} and later for the Schrödinger equation by Holmer \cite{Hol}. To the best of our knowledge Riemann–Liouville fractional integral method was used for PDEs that involved only a single spatial derivative term.  It is also possible to use the Laplace transform in time to construct a \texttt{BdIntOp}, a method which was nicely applied both for the Schrödinger equation \cite{Bona18} and the KdV equation \cite{Bon1} by Bona, et. al.  Laplace transform method is an effective method in general but the technical analysis of solutions gets more difficult if the order of PDE is high and there are multiple spatial derivative terms.  This is because one has to deal with higher order characteristic equations to be able solve an infinite family of higher order ODEs, an algebraic difficulty. In addition, inverting the associated Bromwich integral is another challenge for such PDEs because a subtle singularity analysis must be performed.

An alternative which bypasses issues of the approaches mentioned in the above paragraph is the Fokas method \cite{MixedDer, OY19}. It is worth mentioning that even with this method there are some difficulties for the current problem.  The challenge here is that in this more general setting, where $P=-i(\partial_x^4+\partial_x^2)$, certain analyticity issues arise related with the third step of the \texttt{UTM}.  Observe that, in the case $P=-i\partial_x^4$, the spectral input of boundary terms is $w(k)=-ik^4$. Therefore, there are nontrivial entire (analytic on $\mathbb{C}$) maps such as $k\mapsto \mp ik$, $k\longmapsto -k$, which keep the spectral input invariant.  Existence of such nice maps play an important role in the contour deformation and elimination of unknowns from the formula of the \texttt{BdIntOp}.  On the other hand, the spectral input of boundary terms turn out to be $w(k)=-i(k^4-k^2)$ if $P=-i(\partial_x^4+\partial_x^2)$. It is not clear whether there exists a map $k\mapsto \nu(k)$ which satisfies the invariance property $w(\nu(k))=w(k)$, namely $-i(\nu^4(k)-\nu^2(k))=-i(k^4-k^2)$ and is also analytic  on a union of simply connected open sets, each of which contains the region whose boundary is part of the standard (deformed) contour of integration used in the \texttt{UTM}.

The above technical issue may arise in most higher order PDEs where the main differential operator is a linear combination of more than one term.  An example is the Korteweg-de Vries (KdV) equation \cite{Decon14}. Another context for observing this analyticity issue is higher dimensional PDEs which involve mixed derivatives \cite{MixedDer}.  It can even happen in second order PDEs with a second order time derivative such as the wave equation \cite{Decon18}.  This analyticity issue stems from the use of complex root functions, which are typically discontinuous, to construct invariance maps.  Recently, \cite{MixedDer} recommended rotating the branch cut for the square root function to a suitable degree and proved that this moves the domain of nonanaliticity of the invariance maps away from the desired contour of integration, except at a single branch point which does not affect the relevant analysis. In this work, we follow a similar approach for constructing the \texttt{BdIntOp} associated with \eqref{maineq}-\eqref{bdry}. In the last section, we present the \texttt{BdIntOp} for the class of fourth order Schrödinger operators given in the form $P = -i(\alpha\partial_x^4+\beta\partial_x^2)$, where $0\neq\alpha\in\mathbb{R}$ and $\beta\in \mathbb{R}$.

The main result of the paper is given in Theorem \ref{mainthm}. In this direction, we utilized the nice space-time dependence, i.e., oscillatory nature of the \texttt{BdIntOp} \eqref{bdintop2} for proving the desired dispersion estimates; results on the whole space proved in \cite{Artzi2000} were also used.

\section{Construction of the boundary integral operator}\label{seccon}
In this section, we construct the \texttt{BdIntOp} associated with \eqref{maineq}-\eqref{bdry}.  To this end, we will first assume that $u$ is sufficiently smooth in $\Omega_{T}=\mathbb{R}_+\times (0,T)$ up to the boundary of $\Omega_T$,  and also that $u$ decays sufficiently fast as $x\rightarrow \infty$. Once the \texttt{BdIntOp} is constructed, then the smoothness condition can be given up as the integral will still make sense under much weaker assumptions on data. In particular, the integral formula will serve as the definition of a \emph{weak} solution. In order to obtain a global relation (the first ingredient of the UTM), we introduce the half line Fourier transform:
\begin{equation}\label{yhatkt}
  \hat{y}(k,t) \equiv \int_0^\infty e^{-ikx}y(x,t)dx, \quad \Im k\le 0.
\end{equation} Note that the condition $\Im k\le 0$ is essential for the convergence of the above integral.  We also introduce the functions $\tilde{g}_j$ defined by the formula \eqref{ytildej} for $0\le j\le 2$, so called $t-$\emph{transforms} of boundary traces, some of which are unknown such as $t-$\emph{transforms} of $g_j(t):=\partial_x^jy(0,t)$, $j=2,3$.
Taking the half line Fourier transform of \eqref{maineq}-\eqref{bdry} and integrating the resulting ordinary differential equation in time, we obtain the global relation
\begin{multline}\label{IntythatktRe}
e^{w(k)t}\hat{y}(k,t)=-i\tilde{g}_3(w(k),t)+k\tilde{g}_2(w(k),t)\\
-i(1-k^2)\tilde{g}_1(w(k),t)+k(1-k^2)\tilde{g}_0(w(k),t), \quad \Im\,k\le 0,
\end{multline}
with
\begin{equation}\label{def:w}
w(k)=-i(k^4-k^2).
\end{equation}
Taking the inverse Fourier transform, we find that $u$ must satisfy
\begin{equation}\label{qxt}
y(x,t)=\int_{-\infty}^{\infty}E(k;x,t)\tilde{g}(w(k),t)dk, \quad x\in\mathbb{R_+}, t>0,
\end{equation} where
\begin{equation}\label{def:E}
E(k;x,t)=-\frac{1}{2\pi}e^{ikx-w(k)t}
\end{equation}
 and $\tilde{g}=i\tilde{g}_3-k\tilde{g}_2+(1-k^2)(i\tilde{g}_1-k\tilde{g}_0).$  Since only the Dirichlet and Neumann boundary values are known, the values $\tilde{g}_2$ and $\tilde{g}_3$ are unknowns in the above formulation.  In order to eliminate these unknown boundary terms from \eqref{qxt}, the first step is deforming the integral on the real line to a more suitable contour in the upper half complex plane. To this end, we first define the following regions:
\begin{equation}\label{defD+} D\equiv \{k\in\mathbb{C}\,|\,\Re(w(k))<0\},\quad D^+=D\cap \mathbb{C}_+, D^-=D\cap \mathbb{C}_{-}.
\end{equation} Note that in $\mathbb{C}\setminus D^+$, the exponential $e^{-w(k)(t-s)}$ is bounded.  Therefore, the term $E\tilde{g}$ is analytic and decays as $k\rightarrow \infty$ for $k\in \mathbb{C}\setminus D^+$.  Thus, by using Cauchy's theorem and Jordan's lemma, we can rewrite \eqref{qxt} in the form
\begin{equation}\label{qxtD+}
y(x,t)=\int_{\partial D^+}E(k;x,t)\tilde{g}(w(k),t)dk, \quad x\in\mathbb{R_+}, t>0,
\end{equation} where the orientation is so that $D^+$ stays at the left of $\partial D^+$ as the contour is traversed.

The second step for eliminating unknowns is the use of invariance maps, i.e., maps that keep the spectral input $w(k)$ unchanged. By definition, such a map must satisfy $\nu^4(k)-\nu^2(k)=k^4-k^2$, which is equivalent to $$(\nu(k)-k)(\nu(k)+k)(\nu^2(k)+k^2-1)=0.$$

It follows that one nontrivial invariance map is $k\mapsto -k$. Using this transformation, we can rewrite the global relation \eqref{IntythatktRe} as
\begin{multline}\label{Intythatkt-b}
 e^{w(k)t}\hat{y}(-k,t)=-i\tilde{g}_3(w(k),t)-k\tilde{g}_2(w(k),t)\\
-i(1-k^2)\tilde{g}_1(w(k),t)
-k(1-k^2)\tilde{g}_0(w(k),t), \quad \Im k\ge 0.
\end{multline}

Furthermore, changing $k$ by an invariance map $\nu(k)$ satisfying \begin{equation}\label{invprop}\nu^2(k)=1-k^2\end{equation} in \eqref{Intythatkt-b}, we can rewrite the global relation in the form
\begin{multline}\label{Intythatkt-c}
  e^{w(k)t}\hat{y}(-\nu(k),t)=-i\tilde{g}_3(w(k),t)-\nu(k)\tilde{g}_2(w(k),t)\\
  -ik^2\tilde{g}_1(w(k),t)-\nu(k)k^2\tilde{g}_0(w(k),t), \quad \Im\nu(k)\ge 0.
\end{multline}

Using \eqref{Intythatkt-b} and \eqref{Intythatkt-c}, we have
\begin{multline}\label{Intythatkt-d}
  -k\tilde{g}_2(w(k),t)= -ke^{w(k)t}\frac{(\hat{y}(-k,t)-\hat{y}(-\nu(k),t))}{\nu(k)-k}\\
  -ik(\nu(k)+k)\tilde{g}_1(w(k),t)-k^2\nu(k)\tilde{g}_0(w(k),t)
\end{multline} and
\begin{multline}\label{Intythatkt-e}
  i\tilde{g}_3(w(k),t)=-\frac{ \nu(k)\hat{y}(-k,t)-k\hat{y}(-\nu(k),t)}{\nu(k)-k} e^{w(k)t}\\
  -i\left(k^2+\nu^2(k)+k\nu(k)\right)\tilde{g}_1(w(k),t)-k\nu(k)(k+\nu(k))\tilde{g}_0(w(k),t)
\end{multline} provided $\Im k\ge 0$ and $\Im\nu(k)\ge 0$.

Now, we can rewrite \eqref{qxtD+} in the form
\begin{equation}\label{qxtD+new}
  y(x,t)=\int_{\partial D^+}E(k;x,t)G(k;t)dk +\frac{1}{2\pi}\int_{\partial D^+}e^{ikx}H(k;t)dk,  \quad x\in\mathbb{R_+}, t>0,
\end{equation}
where
\begin{equation}\label{def:G}
G(k;t)=-2ik(k+\nu(k))\tilde{g}_1(w(k),t)-2k\nu(k)(k+\nu(k))\tilde{g}_0(w(k),t)
\end{equation}
 and
$$H(k;t)=\frac{ \nu(k)+k}{\nu(k)-k}\hat{y}(-k,t)-\frac{2k}{\nu(k)-k}\hat{y}(-\nu(k),t).$$
Observe that $H$ becomes singular at $k\in\mathbb{C}$ if $\nu(k)=k$.  This can only be true if $k=\mp \frac{1}{\sqrt{2}}$ due to the invariance property \eqref{invprop}. In either case, this would be only a removable singularity if we knew $\nu$ were analytic.  In that case, we could easily conclude that the second integral in \eqref{qxtD+new} is zero. However, there is no map which both satisfies the invariance property \eqref{invprop} and is for instance analytic in the neighborhood of $k=-1\in \overline{D^+}$. This can be proven by using arguments similar to those in the proof of \cite[Lemma 4.2]{MixedDer}.  In order to deal with this analyticity issue, we introduce a square root function whose branch cut is slightly rotated compared to the standard square root function. We set $\displaystyle \sqrt{z}^*:=|z|^{\frac{1}{2}}e^{i\frac{\arg z}{2}}$ with $\arg z\in [\epsilon,2\pi+\epsilon)$ for some fixed and sufficiently small $\epsilon>0$, and choose
\begin{equation}\label{nukmod}
\nu(k)=\sqrt{1-k^2}^*.
\end{equation}
Then, $\nu$ satisfies the invariance property \eqref{invprop} and is analytic on $\overline{D^+}\setminus \{-1\}$ (See Figure \ref{Branchcut}).

Moreover, $\Im \nu(k)\ge 0$ for all $k\in \overline{D_+}$.  The discontinuity point $k=-1$ can be taken care of by using the same complex analytic arguments given in \cite[Section 4, pg. 13]{MixedDer}.  In more details, we remove a small half ball $B_r$ from $D^+$ around the branch point $k=-1$ and show via Cauchy's theorem and Jordan's lemma that the integral around the boundary of $D^+\setminus B_r$ vanishes as $H$ is analytic and bounded in this region. Moreover, the integral around $\partial B_r$ vanishes as $r\rightarrow 0$ since $H$ is bounded on $B_r$ (even if it is not analytic).  In conclusion, we justify that the second integral in \eqref{qxtD+new} is zero.
\begin{figure}[h]
 \centering
 \includegraphics[width=0.7\textwidth]{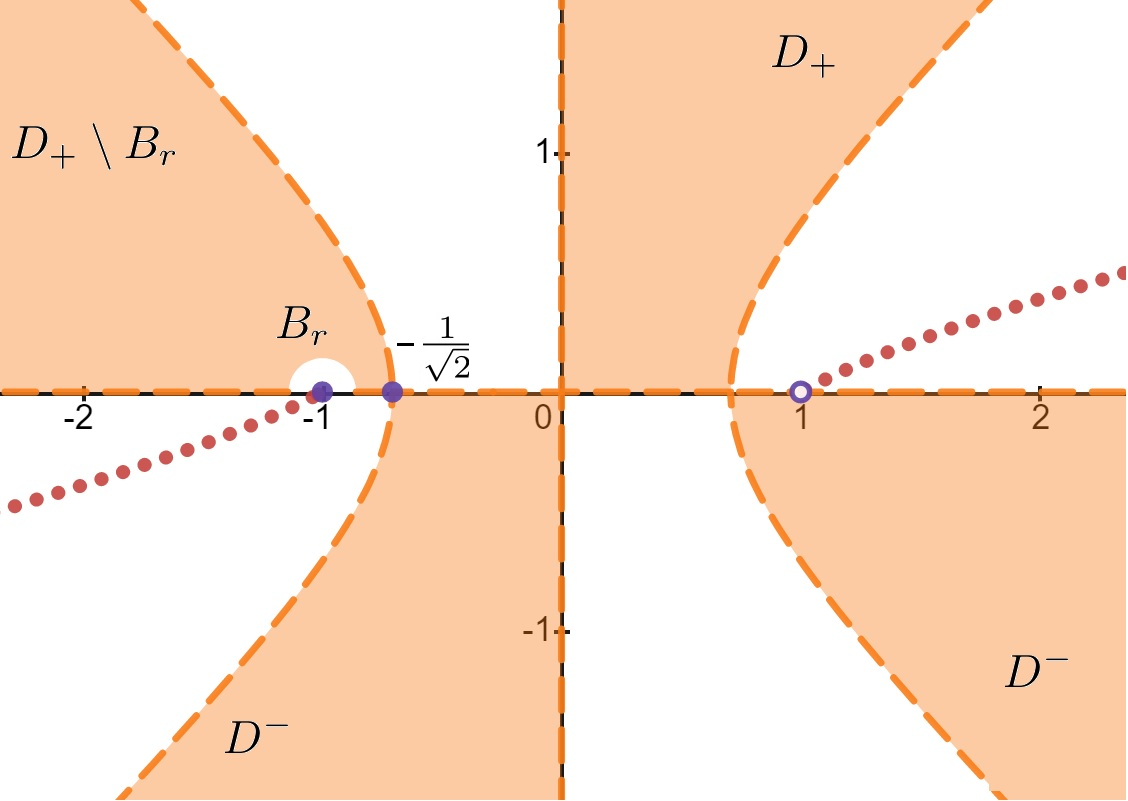}
  \caption{Red path denotes the branch cut of $\nu(k)=\sqrt{1-k^2}^*$}\label{Branchcut}
\end{figure}

Hence, the \texttt{BdIntOp} associated with \eqref{maineq}-\eqref{bdry} is given by
\begin{equation}\label{bdintop2}
  W_b[g_0,g_1](x,t) = \int_{\partial D^+}E(k;x,t)G(k;t)dk,
\end{equation}
where $\nu$ is defined in \eqref{nukmod}, $E$ is defined in \eqref{def:E} and $G$ is defined in \eqref{def:G}. One can of course replace $G(k;t)$ at the right hand side of \eqref{bdintop2} with $G(k;T)$ by using the standard arguments in the Fokas method. Therefore, we have the formula given in the theorem below:
\begin{thm}[Integral representation]\label{rep}Suppose $y$ solves \eqref{maineq}-\eqref{bdry} in $\Omega_{T}=\mathbb{R}_+\times (0,T)$, is sufficiently smooth up to the boundary of $\Omega_T$  and decays sufficiently fast as $x\rightarrow \infty$, uniformly in $t\in [0,T]$.  Then, the associated \texttt{BdIntOp} is defined by
  \begin{equation}\label{bdintop2T}
  W_b[g_0,g_1](x,t) = \int_{\partial D^+}E(k;x,t)G(k;T)dk,
\end{equation} where $E$ and $G$ are given by \eqref{def:E} and \eqref{def:G}, respectively and $\partial D^+$ is the boundary of the region $D^+$ defined in \eqref{defD+} with orientation that $D^+$ remains at the left of $\partial D^+$ as the boundary is traversed.
\end{thm}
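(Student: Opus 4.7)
The plan is to follow the UTM construction sketched before the theorem statement, turning each formal manipulation into a rigorous step under the smoothness and decay hypotheses on $y$. First, I would apply the half-line Fourier transform \eqref{yhatkt} to \eqref{maineq}: repeated integration by parts in $x$ produces the boundary traces $g_j(t)=\partial_x^j y(0,t)$ for $j=0,1,2,3$, and the vanishing initial datum \eqref{init} together with the integrating factor $e^{w(k)t}$ yields the global relation \eqref{IntythatktRe} for $\Im k\le 0$. Taking the inverse Fourier transform then gives the preliminary representation \eqref{qxt} on the real line, in which $\tilde{g}_2$ and $\tilde{g}_3$ are still unknown.

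Next, I would perform the contour deformation from $\mathbb{R}$ to $\partial D^+$. In the region $\mathbb{C}_+\setminus D^+$ one has $\Re w(k)\ge 0$, so the exponential factor in $E\tilde{g}$ is bounded; combined with the polynomial growth of the prefactors and the decay of $\hat{y}(k,t)$ as $|k|\to\infty$ in the upper half plane (which follows from the assumed smoothness and spatial decay of $y$), Jordan's lemma and Cauchy's theorem justify \eqref{qxtD+}. I would then exploit the invariance maps $k\mapsto -k$ and $k\mapsto \nu(k)$ defined by \eqref{nukmod} to produce the transformed global relations \eqref{Intythatkt-b} and \eqref{Intythatkt-c}, and solve the resulting $2\times 2$ linear system for $\tilde{g}_2$ and $\tilde{g}_3$ to obtain \eqref{Intythatkt-d}--\eqref{Intythatkt-e}. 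Substituting these expressions into \eqref{qxtD+} yields the split \eqref{qxtD+new}.

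The main obstacle is to show that the second integral in \eqref{qxtD+new}, whose integrand contains the unknown quantity $H(k;t)$, vanishes. Here $H$ has an apparent singularity at the point $k=-1\in\overline{D^+}$ where $\nu(k)=k$, and $\nu$ is not analytic at that branch point. As indicated in the paragraph following \eqref{nukmod}, the rotated square root renders $\nu$ analytic on $\overline{D^+}\setminus\{-1\}$ with $\Im \nu(k)\ge 0$, so $H$ is analytic on that set; one then excises a small half-ball $B_r$ around $k=-1$, applies Cauchy's theorem on $D^+\setminus B_r$ (exploiting the decay of $\hat{y}(-k,t)$ and $\hat{y}(-\nu(k),t)$ as $|k|\to\infty$ in $\mathbb{C}_+$ via Jordan's lemma), and then lets $r\to 0$, using boundedness of $H$ near $k=-1$ to see that the arc integral also vanishes. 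This argument directly mirrors \cite[Section 4]{MixedDer}.

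Finally, to replace $G(k;t)$ by $G(k;T)$ I would invoke the standard Fokas-method observation: the difference $G(k;T)-G(k;t)$ is a linear combination of integrals $\int_t^T e^{w(k)s}g_j(s)\,ds$, and multiplying by $E(k;x,t)=-\frac{1}{2\pi}e^{ikx-w(k)t}$ produces an integrand proportional to $e^{ikx}e^{w(k)(s-t)}$ with $s>t$. Since $\Re w(k)\ge 0$ on the complement of $D^+$ in the upper half plane and the resulting integrand decays at infinity there, another application of Cauchy's theorem shows that the extra contribution arising from extending the upper limit from $t$ to $T$ is zero. Consequently, \eqref{bdintop2} and \eqref{bdintop2T} agree, which completes the proof.
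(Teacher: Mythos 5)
Your outline reproduces, step by step, the construction the paper itself carries out in Section \ref{seccon} (the theorem is essentially a summary of that construction): global relation, inverse transform, deformation to $\partial D^+$, elimination of $\tilde{g}_2,\tilde{g}_3$ via the invariance maps $k\mapsto -k$ and $k\mapsto\nu(k)$ with the rotated branch cut \eqref{nukmod}, and the excision argument around the branch point $k=-1$ to kill the $H$--integral. All of that matches the paper's argument.

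There is, however, one concrete error in your justification of the final step, where you replace $G(k;t)$ by $G(k;T)$. The difference term is proportional to $e^{ikx}\int_t^T e^{w(k)(s-t)}g_j(s)\,ds$ with $s-t>0$, so the factor $e^{w(k)(s-t)}$ is bounded (indeed decaying) precisely where $\Re w(k)\le 0$, i.e.\ on $\overline{D^+}$ --- \emph{not} on the complement of $D^+$ in $\mathbb{C}_+$, where $\Re w(k)\ge 0$ makes this factor grow like $e^{c|k|^4}$. You have the regions reversed: since $\partial D^+$ is the (closed) boundary of $D^+$ with $D^+$ on its left, the correct application of Cauchy's theorem and Jordan's lemma uses analyticity and decay of $e^{ikx}e^{w(k)(s-t)}$ \emph{inside} $D^+$ (where $e^{ikx}$ is bounded because $\Im k\ge 0$, $x\ge 0$, and $e^{w(k)(s-t)}$ decays because $\Re w(k)<0$ and $s>t$) to conclude that the integral of the difference over $\partial D^+$ vanishes. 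With that correction the argument closes, and it is exactly the ``standard Fokas argument'' the paper invokes without detail.
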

  The advantage of the above form with $T$ in \eqref{bdintop2T} relative to the formulation in \eqref{bdintop2} is that differentiation with respect to space and time is very straightforward since it only affects the exponential term $E(k;x,t).$ This is important for interpolation arguments because an estimate at the base level can then be extended to higher regularity levels via differentiating in $x$ and applying the base level estimate again.

The main result concerning the spatial norms associated with the \texttt{BdIntOp} is below:
\begin{thm}[Dispersion estimates]\label{mainthm}
  Let $W_b$ be the \texttt{BdIntOp} defined by \eqref{bdintop2T}.  Then, it satisfies the following estimate
  \begin{equation}\label{finalest}
\left\|W_b[g_0,g_1]\right\|_{L_x^r(\mathbb{R}_+)} \lesssim t^{-(\frac{1}{4}-\frac{1}{2r})}\sum_{i=1}^5\|\Psi_i\|_{L^{r'}}, \qquad 0<t\leq 1,
\end{equation} for $r\in [2,\infty]$, where $\Psi_i$, $i=\overline{1,5}$ are defined in \eqref{psi1}, \eqref{psi2}, \eqref{FTphi3}, \eqref{FTphi4}, \eqref{psi5}, respectively in terms of given Dirichlet-Neumann data $(g_0, g_1)$.
\end{thm}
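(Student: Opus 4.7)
The plan is to decompose the contour $\partial D^+$ into five smooth pieces, match each piece to a whole-line fourth-order Schrödinger-type evolution applied to an auxiliary function $\Psi_i$ built from the Dirichlet--Neumann data $(g_0, g_1)$, invoke the whole-line dispersion estimate of Ben-Artzi et al.\ \cite{Artzi2000}, and interpolate with the trivial $L^2\to L^2$ bound.

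\textbf{Geometric decomposition.} Writing $k = a+ib$, a direct calculation gives $\Re(w(k)) = 2ab(2(a^2-b^2)-1)$, so $\partial D^+$ in the closed upper half-plane consists of five smooth arcs: the real segments $\Gamma_1 = (-\infty, -1/\sqrt{2}]$ and $\Gamma_3 = [0, 1/\sqrt{2}]$, the imaginary ray $\Gamma_4 = [0, +i\infty)$, and two pieces $\Gamma_2, \Gamma_5$ of the hyperbola $a^2 - b^2 = 1/2$ asymptotic to the rays $\arg k = 3\pi/4$ and $\arg k = \pi/4$, respectively. Accordingly I split $W_b[g_0,g_1] = \sum_{i=1}^5 I_i$, with $I_i = \int_{\Gamma_i} E(k;x,t)\,G(k;T)\,dk$ and orientation inherited from $\partial D^+$.

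\textbf{Reduction, dispersion, interpolation.} On each $\Gamma_i$ I introduce a real parameter $\xi$ so that the integrand becomes $e^{i\alpha_i(\xi)x + i\beta_i(\xi)t} F_i(\xi)\,d\xi$, with $\beta_i$ a real quartic polynomial and $F_i$ a smooth factor built from $\tilde g_0, \tilde g_1$. On $\Gamma_1 \cup \Gamma_3$ the choice $\xi = k$ gives $\alpha(\xi) = \xi$ and $\beta(\xi) = \xi^4 - \xi^2$; on $\Gamma_4$ the substitution $k = i\xi$ yields $\alpha(\xi) = i\xi$ (hence a pointwise decay factor $e^{-\xi x}$) and $\beta(\xi) = \xi^4 + \xi^2$; on $\Gamma_2, \Gamma_5$, the identity $\Im w(k) = (2a^2 - 1/2)^2$, which follows from $a^2-b^2=1/2$, motivates the substitution $\xi^2 = 2a^2 - 1/2$, giving $\beta(\xi) = -\xi^4$. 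The auxiliary data $\Psi_1,\ldots,\Psi_5$ entering \eqref{finalest} are defined by the Fourier- and Laplace-side formulas \eqref{psi1}, \eqref{psi2}, \eqref{FTphi3}, \eqref{FTphi4}, \eqref{psi5} so that each $I_i(\cdot,t)$ becomes, up to harmless smooth multipliers, a whole-line linear evolution $e^{it Q_i(\partial_x)} \Psi_i$ with $Q_i$ a real polynomial of exact degree four. The whole-line dispersion result from \cite{Artzi2000} then gives $\|I_i(t)\|_{L^\infty(\mathbb{R}_+)} \lesssim t^{-1/4}\|\Psi_i\|_{L^1}$ for $0 < t \le 1$, while Plancherel gives $\|I_i(t)\|_{L^2(\mathbb{R}_+)} \lesssim \|\Psi_i\|_{L^2}$; Riesz--Thorin interpolation yields $\|I_i(t)\|_{L^r(\mathbb{R}_+)} \lesssim t^{-(1/4 - 1/(2r))}\|\Psi_i\|_{L^{r'}}$ for $r\in[2,\infty]$, and summing over $i$ produces \eqref{finalest}.

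\textbf{Main difficulty.} The technically delicate step is identifying the five $\Psi_i$'s so that each contour-piece contribution really is a whole-line evolution of an $L^{r'}$ extension of the given data. The substitution $\xi^2 = 2a^2 - 1/2$ on the hyperbolic pieces is singular precisely at the corner points $a = \pm 1/\sqrt{2}$ where $\Gamma_2, \Gamma_3, \Gamma_5$ meet the real axis, and the modified square root $\nu(k) = \sqrt{1-k^2}^*$ appearing in $G(k;T)$ introduces a branch cut whose behaviour near $k=-1\in\overline{D^+}$ must be tracked as in Section~\ref{seccon}. Once this bookkeeping is carried out, the \cite{Artzi2000} estimate applied on each piece closes the argument uniformly in $0 < t \le 1$.
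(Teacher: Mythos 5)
Your contour decomposition is the same as the paper's (five arcs: two real segments, the positive imaginary axis, and the two branches of the hyperbola $a^2-b^2=\tfrac12$), and your treatment of the two \emph{real} segments matches the paper exactly: extend to a whole-line oscillatory integral because $\hat\Psi_i$ is compactly supported or supported on a half-line of the real axis, apply Theorem~\ref{thm1} from \cite{Artzi2000} for the $L^1\to L^\infty$ bound (this is where $0<t\le 1$ enters), use Plancherel for $L^2\to L^2$, and interpolate. However, there is a genuine gap in your handling of the other three pieces. On the imaginary axis the spatial dependence is $e^{-sx}$, and on the hyperbola branches it is $e^{isx}e^{-(s^2-1/2)^{1/2}x}$; in neither case is the contribution a Fourier multiplier in $x$, so it cannot be written as $e^{itQ_i(\partial_x)}\Psi_i$ ``up to harmless smooth multipliers,'' and \cite{Artzi2000} does not apply: after inserting $\hat\Psi_i(s)=\int e^{-isy}\Psi_i(y)\,dy$ and exchanging the order of integration, the resulting kernels are oscillatory integrals over the \emph{half-lines} $[0,\infty)$ and $[1/\sqrt2,\infty)$ carrying an $x$-dependent exponentially decaying amplitude, not the whole-line kernel of Theorem~\ref{thm1}. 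Your proposed substitution $\xi^2=2a^2-\tfrac12$ on the hyperbola makes matters worse, since it renders the spatial phase nonlinear in $\xi$ and destroys the convolution structure you are relying on.

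What is actually needed for these three pieces — and what constitutes the technical core of the paper's proof — is: (i) Van der Corput-type bounds $|\int_{0}^{s}e^{i(\xi^4+\xi^2)t-i\xi y}d\xi|\lesssim t^{-1/4}$ and $|\int_{1/\sqrt2}^{s}e^{i\xi\omega-i(4\xi^4-2\xi^2+\frac14)t}d\xi|\lesssim t^{-1/4}$ with constants \emph{uniform in the upper endpoint} $s$ (Lemmas~\ref{van} and~\ref{van2}); (ii) an integration by parts of the kernel against the monotone amplitude $p(s,x)=e^{-sx}$ (resp.\ $e^{-(s^2-1/2)^{1/2}x}$), whose total variation in $s$ is bounded by $1$ uniformly in $x>0$, which converts the uniform endpoint bound into $|K_\ell|\lesssim t^{-1/4}$; and (iii) for the $L^2$ endpoint, the boundedness of the Laplace transform on $L^2(0,\infty)$ in place of Plancherel. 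None of these steps appears in your proposal, and without them the $L^1\to L^\infty$ and $L^2\to L^2$ bounds for the off-axis pieces are unjustified. (A side remark: these pieces actually do not need the restriction $t\le 1$; only the real-axis pieces, where the phase $s^4-s^2$ forces the hypothesis of Theorem~\ref{thm1}, do.)
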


The dispersion estimates found above imply $L_t^\lambda L_x^r(\mathbb{R}_+)$ type Strichartz estimates with respect to $L^2$ norm of $\Psi_i$, $i=1,2,3,4$ for suitable, i.e., biharmonic admissible, $(\lambda, r)$, i.e., $\frac{1}{8}=\frac{1}{4r}+\frac{1}{\lambda}$, $\lambda,r\in[2,\infty]$. Observe that the representation formula is very  favorable  for differentiating with respect to $x$ and each derivative merely brings a factor of $k$ into the integrand. Therefore, one can differentiate and obtain $L_t^\lambda W_x^{s,r}(\mathbb{R}_+)$ type estimates with respect to  $H^s$ norms of $\Psi_i$ at first for integer $s$ and then by interpolation for fractional $s$.  Finally, it is not difficult to show by using the Fourier characterization of Sobolev norms that $H^s$ norms of $\Psi_i$ are controlled by $H_t^{\frac{2s+3}{8}}(0,T)$ and  $H_t^{\frac{2s+1}{8}}(0,T)$ norms of boundary data $g_0$ and $g_1$, respectively. See for instance \cite{Ozsari21Str1} for such arguments in the case of the Schrödinger equation. Therefore, we have the corollary below whose proof can be done by using the dispersion estimate in Theorem \ref{mainthm} and slightly modifying the arguments given in \cite{Ozsari21Str1} for the classical Schrödinger equation.

\begin{cor}[Strichartz estimates]\label{cor1}
	Let $s\ge0$, $T\le 1$, $g_0\in H_t^{\frac{2s+3}{8}}$, $g_1\in H_t^{\frac{2s+1}{8}}$ with $supp\,g_0, supp\,g_1 \subset [0,T)$ and $(\lambda,r)$ be biharmonic admissible and $W_b$ be the \texttt{BdIntOp} defined by \eqref{bdintop2T}. Then, $W_b[g_0,g_1]$ defines an element of $C([0,T];H^s(\mathbb{R}_+))$ that satisfies the following inhomogeneous Strichartz estimate:
	\begin{equation}\label{Strnew}|W_b[g_0,g_1]|_{L_t^\lambda(0,T; W_x^{s,r}(\mathbb{R}_+))}\lesssim |g_0|_{H_t^{\frac{2s+3}{8}}(\mathbb{R})}+|g_1|_{H_t^{\frac{2s+1}{8}}(\mathbb{R})},\end{equation} where the constant of the inequality depends on $s$.
\end{cor}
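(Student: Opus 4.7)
The strategy is to promote the pointwise-in-time dispersion estimate of Theorem \ref{mainthm} to a mixed-norm Strichartz estimate via the abstract Keel--Tao / $TT^*$ machinery, and then translate the $L^2$ norms of the spectral data $\Psi_i$ into Sobolev norms of the boundary data on the time axis. The decay rate $\sigma(r)=\frac14-\frac{1}{2r}$ in \eqref{finalest} coincides with that of the free biharmonic Schr\"odinger propagator on $\mathbb{R}$, and the biharmonic admissibility $\frac{1}{\lambda}+\frac{1}{4r}=\frac18$ is precisely the one that converts an $L^{r'}\to L^r$ decay of this rate into $L^\lambda_t L^r_x$ control by the $L^2$ norm of the ``spectral initial data.'' Writing the contribution of each $\Psi_i$ as a linear operator $T_i\colon\Psi_i\mapsto W_b^{(i)}$ whose kernel is $E(k;x,t)$ times an algebraic factor depending only on $k$ along $\partial D^+$, one has the trivial energy bound $\|T_i\Psi_i(\cdot,t)\|_{L^2_x}\lesssim\|\Psi_i\|_{L^2}$ by Plancherel and the dispersive decay $\|T_i\Psi_i(\cdot,t)\|_{L^r_x}\lesssim t^{-\sigma(r)}\|\Psi_i\|_{L^{r'}}$ from Theorem \ref{mainthm}. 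Applying the abstract Keel--Tao Strichartz theorem to each $T_i$ yields the intermediate bound
\[ \|W_b[g_0,g_1]\|_{L^\lambda_t(0,T;L^r_x(\mathbb{R}_+))}\lesssim \sum_{i=1}^5\|\Psi_i\|_{L^2} \]
for every biharmonic admissible $(\lambda,r)$.

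The next step is to bound $\|\Psi_i\|_{L^2}$ by the claimed Sobolev norms of $g_0,g_1$. Parametrise $\partial D^+$ piecewise and change variables from $k$ to $\eta=-\Im\,w(k)$; on each unbounded branch $|w'(k)|\sim|k|^3$ for $|k|$ large, so the Jacobian obeys $|dk|\sim|\eta|^{-3/4}|d\eta|$, while the compact support of $g_j$ in $(0,T)$ lets one identify $\tilde{g}_j(w(k),T)$ with the Fourier transform $\widehat{g_j}(\eta)$ modulo a harmless bounded correction. Inspecting \eqref{def:G} and using $|\nu(k)|\sim|k|$ at infinity, the algebraic prefactor multiplying $\widehat{g_1}$ is of degree $\alpha=2$ in $k$, and the one multiplying $\widehat{g_0}$ is of degree $\alpha=3$; the spectral-side norm thus transforms as
\[ \|\Psi_i\|_{L^2}^2\;\lesssim\;\int(1+|\eta|)^{\alpha/2-3/4}|\widehat{g_j}(\eta)|^2\,d\eta\;\lesssim\;\|g_j\|_{H^{(2\alpha-3)/8}}^2, \]
which gives exactly $\|g_1\|_{H^{1/8}_t}$ (with $\alpha=2$) and $\|g_0\|_{H^{3/8}_t}$ (with $\alpha=3$)---the $s=0$ case of \eqref{Strnew}.

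For positive integer $s$, differentiating \eqref{bdintop2T} in $x$ affects only the factor $E(k;x,t)=-\frac{1}{2\pi}e^{ikx-w(k)t}$ and produces an extra $(ik)^s$ in the integrand; this amounts to replacing $\Psi_i$ by $k^s\Psi_i$, i.e., raising $\alpha\mapsto \alpha+s$, so the two preceding steps deliver the exponents $(2s+1)/8$ and $(2s+3)/8$ claimed in \eqref{Strnew}. Fractional $s\ge 0$ is then obtained by complex interpolation between consecutive integer levels, and the continuity $W_b[g_0,g_1]\in C([0,T];H^s(\mathbb{R}_+))$ follows by approximating $(g_0,g_1)$ with smooth compactly supported data---for which the integral representation converges absolutely and defines a classical solution continuous in $t$---and passing to the limit with the established Strichartz bound. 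The principal obstacle is the bookkeeping in the second paragraph: $\partial D^+$ is a union of curves along which $k\mapsto w(k)$ is not globally injective, and $\nu(k)$ carries the rotated branch cut with a branch point at $k=-1$, so each branch must be handled separately; moreover, the low-frequency region (where $w(k)\sim k^2$ and the large-$|k|$ asymptotics fail) must be localised away and its contribution absorbed into a lower-order term using the compact time support of $g_0,g_1$, exactly along the lines of the Schr\"odinger case in \cite{Ozsari21Str1}.
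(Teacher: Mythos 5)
Your proposal follows essentially the same route as the paper, which only sketches this proof: the paper likewise passes from the dispersion estimate of Theorem \ref{mainthm} to $L_t^\lambda L_x^r$ bounds in terms of $\|\Psi_i\|_{L^2}$ for biharmonic admissible pairs, obtains integer $s$ by differentiating \eqref{bdintop2T} in $x$ (each derivative contributing a factor of $k$), interpolates for fractional $s$, and converts $H^s$ norms of the $\Psi_i$ into the $H_t^{(2s+3)/8}$ and $H_t^{(2s+1)/8}$ norms of $g_0$ and $g_1$ via the Fourier characterization of Sobolev norms, deferring details to \cite{Ozsari21Str1}. Your change of variables $\eta=-\Im w(k)$ with Jacobian $|\eta|^{-3/4}$ and prefactor degrees $2$ and $3$ reproduces exactly the claimed exponents, so the proposal is a correct (and somewhat more explicit) elaboration of the paper's argument.
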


\subsection*{Further implications}

\begin{itemize}
	\item 	The dispersion estimate and the Strichartz estimate in this paper also hold in the easier case where the pde only involves the biharmonic operator and does not involve the Laplacian. Moreover, in the purely biharmonic case the restriction $t\le 1$ in \eqref{finalest} and the condition on $T$ in Corollary \ref{cor1} can both be removed. This is because the analog of the oscillatory estimate in Theorem \ref{thm1} does not require the restriction $0<t\le 1$ when the oscillatory term in the integral does not involve the exponent $s^2$ associated with the Laplacian, see \cite{Artzi2000}.\\
	
	\item The results of Theorem \ref{mainthm} and Corollary \ref{cor1} are also useful for treating the corresponding nonlinear problems.  Recently, \cite{Guo21} studied the local wellposedness for the nonlinear fourth order Schrödinger equation posed on the half line with inhomogeneous Dirichlet-Neumann boundary conditions.  The authors obtained local wellposedness in the high regularity setting, namely for $s>1/2$. The problem remains open in the low regularity setting $0\le s<1/2$ which is a more difficult problem even for power type nonlinearities such as $u\mapsto |u|^pu$, $p>0$. This is because the space $H^s(\mathbb{R}_+)$ is no longer a Banach algebra for $s>1/2$.  The classical tool for treating this difficulty is using Strichartz estimates.  Therefore, the Strichartz estimate in Corollary \eqref{cor1} can be considered as a first step towards establishing local wellposedness in the low regularity setting for the associated nonlinear models. Of course, in addition to the boundary type Strichartz estimates established here, one also needs to prove time trace estimates in fractional Sobolev spaces for the homogeneous and nonhomogeneous linear Cauchy problems to be able to fully treat the nonlinear problem.  Proving Strichartz estimates for the homogeneous and nonhomogeneous Cauchy problems is not difficult and can be done by modifying the well known arguments for the classical Schrödinger equation.  However, the time trace analysis of solutions of the nonhomogeneous Cauchy problem is a quite challenging problem at the low regularity setting. We leave this problem as well as the full treatment of the nonlinear model as a future work.\\
	
	\item   The results of this section extend to more general type of fourth order differential operators in the form  $P = -i(\alpha\partial_x^4+\beta\partial_x^2)$, where $0\neq\alpha\in\mathbb{R}$ and $\beta\in \mathbb{R}$. We show in the last section that $D^+$ and the correct choice of a branch cut for the square root function in the definition of invariance maps change depending on the signs and values of $\alpha$ and $\beta$.
\end{itemize}

\section{Dispersion estimates - Proof of Theorem \ref{mainthm}} In this section, we prove that dispersion estimates for the fourth order Schrödinger equation posed on the whole line (see e.g., \cite{Artzi2000}) can be extended to the case of half line and in particular one can obtain boundary smoothing properties associated with \texttt{BdIntOp} \eqref{bdintop2} . To this end, we first observe that the integral on $\partial D^+$ is equivalent to the sum of integrals over the union of paths given by
%give a mixed representation formula involving the asymptotic form of $D^+$.  Recall that the asymptotic form of $D^+$ (for the evolution equation under consideration) is given by $D_R^+=\bigcup_{\ell=0}^1D_R^\ell$, where
%$$D_R^\ell=\left\{k\,|\,\arg k\in (\frac{\pi}{4},\frac{\pi}{2})+\frac{\ell\pi}{2}\right\}, \ell=0,1.$$  It is well known from the Fokas method that the integral over the boundary of  $D_1:=D^+\cap \{\Re k< 0\}$ can be deformed to an integral over $\partial D_R^1$.  Similarly, the integral over the boundary of the right hand side of $D^+$, namely the boundary of $D_0:=D^+\cap \{\Re k> 0\}$ can be deformed, but we will not do this for technical reasons.  Thus, we rewrite \eqref{bdintop2} in the form:
%\begin{equation}\label{RewriteWb}
%   W_b[g_0,g_1](x,t) = \int_{\partial D_0}E(k;x,t)G(k;T)dk+\int_{\partial D_R^1}E(k;x,t)G(k;T)dk.
%\end{equation}
%Now, we make the decompositions $\partial D_R^1= \gamma_1\cup\gamma_2\cup \gamma_3$ and $\partial D_0=\gamma_4\cup \gamma_5$, where
\begin{align}
&\gamma_1(s)=is,\quad \quad \infty > s \geq 0, \label{gamma1}\\
&\gamma_2(s)=s,\quad \quad 0< s < \tfrac{1}{\sqrt{2}}, \label{gamma2}\\
&\gamma_3(s)=s+i(s^2-\tfrac{1}{2})^{\tfrac{1}{2}},\quad \tfrac{1}{\sqrt{2}}\leq s < \infty , \label{gamma3}\\
&\gamma_4(s)=-s+i(s^2-\tfrac{1}{2})^{\tfrac{1}{2}} ,\quad \tfrac{1}{\sqrt{2}}\leq s < \infty, \label{gamma4}\\
&\gamma_5(s)=s,\quad\quad -\infty < s \le -\tfrac{1}{\sqrt{2}}. \label{gamma5}
\end{align}

%\begin{figure}[h]
%  \centering
%  \includegraphics[width=5cm]{asymp}
%  \caption{Partially asymptotic form}\label{asymp}
%\end{figure}

We first split the representation formula in five pieces according to the above paths: \begin{equation}\label{Wbdecomp}
                     W_b[g_0,g_1](x,t) = \sum_{\ell=1}^5\int_{\gamma_\ell}E(k;x,t)G(k;T)dk=: W_b^\ell[g_0,g_1](x,t).
                   \end{equation}
We will find estimates for each of the terms at the right hand side of \eqref{Wbdecomp}.

For $\ell=1$, we have
\begin{equation}\label{Wb1rewrite}
\begin{split}
   W_b^1[g_0,g_1](x,t)=\int_{\gamma_1}E(k;x,t)G(k;T)dk\\
   =\frac{i}{2\pi}\int_0^\infty e^{-sx+i(s^4+s^2)t}G(is,T)ds.
\end{split}
\end{equation}
Let $\Psi_1$ is defined to be the inverse Fourier transform of $\hat{\Psi}_1$, where
\begin{equation}\label{psi1}
  \hat{\Psi}_1(s)=G(is,T) \text{ for }s\ge 0\text{ and }\hat{\Psi}_1(s)=0\text{ for }s< 0.
\end{equation}
 Then,
\begin{equation}\label{Wb1rewrite2}
  \begin{split}
   W_b^1[g_0,g_1](x,t)=\frac{1}{2\pi}\int_{0}^\infty e^{-sx+i(s^4+s^2)t}\int_{-\infty}^\infty e^{-isy}\Psi_1(y) dyds.\end{split}
\end{equation}
By changing the order of integration, we can represent $W_b^1[g_0,g_1](x,t)$ as
\begin{equation}\label{Wb1rewrite3}
\begin{split}
W_b^1[g_0,g_1](x,t)&=\frac{1}{2\pi}\int_{-\infty}^\infty \left[\int_{0}^\infty e^{-sx+i(s^4+s^2)t-isy}ds\right] \Psi_1(y)dy\\
&=\frac{1}{2\pi}\int_{-\infty}^\infty K_1(y;x,t) \Psi_1(y)dy,
\end{split}
\end{equation}
where $K_1(y;x,t)$ is called the kernel of $W_b^1$ and given by
\begin{equation}\label{w1kernel}
\begin{split}
K_1(y;x,t)=\int_{0}^\infty e^{-sx+i(s^4+s^2)t-isy}ds=\int_{0}^\infty e^{i\phi(s;y,t)}\mathit{p}(s,x) ds
\end{split}
\end{equation}
with the amplitude function $\mathit{p}(s,x)=e^{-sx}$ and the phase function $$\phi(s;y,t)=(s^4+s^2)t-sy.$$  We have the following lemma.

\begin{lem}\label{van}
  Let $$I(s;y,t) \equiv \int_{0}^{s}e^{i(\xi^4+\xi^2)t-i\xi y}d\xi.$$ Then, $$|I(s;y,t)|\le ct^{-1/4},$$ where $c>0$ is independent of $y\in\mathbb{R}$ and $t,s>0$.
\end{lem}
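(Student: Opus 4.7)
The natural approach is to view $I(s;y,t)$ as a pure oscillatory integral (amplitude identically $1$) with phase $\phi(\xi)=(\xi^4+\xi^2)t-\xi y$. The successive derivatives are
\[
\phi'(\xi)=(4\xi^3+2\xi)t-y,\quad \phi''(\xi)=(12\xi^2+2)t,\quad \phi'''(\xi)=24\xi\, t,\quad \phi^{(4)}(\xi)=24\,t.
\]
In particular $\phi^{(4)}$ is a positive constant on all of $\mathbb{R}$, uniformly bounded below in $\xi$, $y$, and $s$, and the parameter $y$ drops out of every derivative of order two or higher.

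The plan is to invoke the fourth-order Van der Corput lemma: if $\phi\in C^4([a,b])$ satisfies $|\phi^{(4)}|\ge \Lambda$ throughout $[a,b]$, then
\[
\left|\int_a^b e^{i\phi(\xi)}\,d\xi\right|\,\le\, c_4\,\Lambda^{-1/4},
\]
with an absolute constant $c_4$ that does not depend on $a$, $b$, or the particular phase. Setting $[a,b]=[0,s]$ and $\Lambda=24t$ immediately yields $|I(s;y,t)|\le c\, t^{-1/4}$ with $c=c_4\,(24)^{-1/4}$, which is the claimed estimate.

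It is worth highlighting why this single invocation already produces a fully uniform bound. Uniformity in $y$ follows because Van der Corput at order $k\ge 2$ ignores lower-order polynomial terms of the phase and does not require any monotonicity of $\phi'$; uniformity in $s$, as well as the validity as $s\to\infty$, follows because the Van der Corput bound is independent of the length of the interval. The extra quadratic term $\xi^2 t$, which distinguishes our phase from the purely biharmonic one considered in \cite{Artzi2000}, is annihilated by the fourth derivative and therefore leaves the $t^{-1/4}$ rate intact.

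The only genuine thing to verify is the uniform lower bound on $|\phi^{(4)}|$, which is immediate from $\phi^{(4)}\equiv 24t$. There is consequently no substantive obstacle: no stationary-phase analysis, no non-stationary integration by parts, and no splitting of the range of integration is needed. This is precisely the situation Van der Corput's lemma is designed to handle, and the proof reduces to a single application of it.
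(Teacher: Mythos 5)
Your proof is correct and rests on the same mechanism as the paper's: a fourth-order Van der Corput estimate applied to the phase $(\xi^4+\xi^2)t-\xi y$, whose fourth derivative is the constant $24t$, so the bound is uniform in $y$ and in the interval $[0,s]$. The only difference is that you invoke the standard lemma as a black box, whereas the paper reproves it by hand (setting $\delta=t^{-1/4}$, splitting at $\xi=\delta/24$, isolating the stationary point of $\phi'$, and integrating by parts) explicitly because it reuses those steps for the modified phase in Lemma \ref{van2}.
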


\begin{proof}
	See Appendix \ref{ProofofLemma31}.
\end{proof}

\begin{rem}
  Note that in the above lemma the interval of integration is finite but the constant of the inequality is independent of the upper limit $s$ which is a crucial ingredient for the next lemma below. The unbounded case where the interval of integration is the whole line is given below in Theorem \ref{thm1} and due to Ben-Artzi, Koch, Saut \cite{Artzi2000}. The unbounded case is critical in the analysis of $W_b^2$.
\end{rem}

%\begin{lem}[\cite{Kenig91}](Kenig91) \label{Lem4.2}
%	Let $\mathcal{I}(y,t,s)=\int_0^se^{i(\theta(s')t-s'y)}ds'$ with $\theta(s')=s'^4+s'^2$. Then
%	\begin{equation}\label{w1kernel}
%	\begin{split}
%\abs{\mathcal{I}(y,t,s)}\leq c\abs{t}^{-1/4}, \quad y,t,s\in\mathbb{R}
%	\end{split}
%	\end{equation}
%	where $c$ is positive constant independent of $y,t,s$.
%\end{lem}
 \begin{lem}\label{k1lem}
 	The kernel of $W_b^1$ defined by $\eqref{w1kernel}$ satisfies the following dispersive estimate:
 	\begin{equation}
 	\begin{split}
 	\abs{K_1(y;x,t)}\lesssim t^{-1/4},
 	\end{split}
 	\end{equation}
 	where $x,t\in \mathbb{R}_{+}$ and $y \in\mathbb{R}$.
 \end{lem}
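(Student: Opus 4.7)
The key observation is that Lemma \ref{van} gives a uniform bound on the antiderivative of the purely oscillatory part of the integrand, so the natural strategy is integration by parts: treat the oscillatory factor $e^{i\phi(s;y,t)}$ as $dv$ (so $v = I(s;y,t)$ is controlled by $t^{-1/4}$ uniformly in the upper limit and in $y$), and treat the amplitude $e^{-sx}$ as $u$. This transfers the oscillation to a harmless exponential-decay integral in $s$, and the cost of the derivative, namely a factor of $x$, is exactly cancelled by the $L^1$ mass $\int_0^\infty e^{-sx}ds = 1/x$.

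Concretely, I would set $F(s) := I(s;y,t) = \int_0^s e^{i\phi(\xi;y,t)}d\xi$, so that $F'(s) = e^{i\phi(s;y,t)}$ and, by Lemma \ref{van}, $\|F\|_{L^\infty(\mathbb{R}_+)} \le c\,t^{-1/4}$ uniformly in $y\in\mathbb{R}$ and $t>0$. Then integrating by parts in \eqref{w1kernel},
\begin{equation*}
K_1(y;x,t) = \bigl[e^{-sx}F(s)\bigr]_0^\infty + x\int_0^\infty e^{-sx}F(s)\,ds.
\end{equation*}
The boundary contribution at $s=0$ vanishes because $F(0)=0$; the contribution at $s=\infty$ vanishes for $x>0$ because $F$ is bounded while $e^{-sx}\to 0$. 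Hence
\begin{equation*}
|K_1(y;x,t)| \le x\int_0^\infty e^{-sx}|F(s)|\,ds \le c\,t^{-1/4}\cdot x\int_0^\infty e^{-sx}\,ds = c\,t^{-1/4},
\end{equation*}
which is precisely the claimed bound.

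The boundary case $x=0$, if not already excluded by the convention $x\in\mathbb{R}_+$, is handled directly: $K_1(y;0,t) = \lim_{S\to\infty}I(S;y,t)$, which is again bounded by $c\,t^{-1/4}$ by Lemma \ref{van}. I do not expect any real obstacle in this argument; the only subtlety is ensuring that the bound on $F$ is genuinely uniform in the upper limit (which is exactly the content and the stated point of the remark following Lemma \ref{van}), since without that uniformity the integration-by-parts step would not close. Everything else is a one-line manipulation.
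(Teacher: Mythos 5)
Your proof is correct and follows essentially the same route as the paper: both arguments integrate by parts against the antiderivative $I(s;y,t)$ of the oscillatory factor, kill the boundary terms using $I(0)=0$ and the decay of $e^{-sx}$, and then use the uniform bound $|I(s;y,t)|\le c\,t^{-1/4}$ from Lemma \ref{van} together with $x\int_0^\infty e^{-sx}\,ds=1$ to close the estimate. Your explicit treatment of the boundary term at $s=\infty$ and of the degenerate case $x=0$ is a minor addition; no gap.
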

\begin{proof}
	We first set $\Phi(s;y,t)\equiv I(s;y,t)$. Then write the kernel
	\begin{equation}\label{w1Krewrite}
K_1(y;x,t)=\int_{0}^\infty \left[ \frac{d}{ds}\Phi(s;y,t)\right] p(s,x)ds.
	\end{equation}
	Integrating by parts at the RHS of $\eqref{w1Krewrite}$ and using $$\lim_{s\rightarrow \infty}\Phi(s;y,t)p(s;x)=0$$ we find
	\begin{equation}\label{w1esK}
	\abs{K_1(y;x,t)}\leq \int_{0}^{\infty} \abs{\Phi(s;y,t)}\abs{\frac{d}{ds}p(s,x)}ds.
	\end{equation}
	By Lemma $\eqref{van}$, we have $\abs{\Phi(s;y,t)}\lesssim t^{-1/4}$. Therefore,
	\begin{equation}
    \begin{split}
	\abs{K_1(y;x,t)}\le c t^{-1/4}\int_0^\infty\abs{\frac{d}{ds}p(s,x)}ds=ct^{-1/4}\left(x\int_0^\infty e^{-sx}ds\right)\\
=ct^{-1/4}(1-e^{-sx})\leq ct^{-1/4}.
\end{split}
	\end{equation}
\end{proof}
The following estimate is deduced from Lemma \ref{k1lem}:
\begin{equation}\label{wb1linfty}
  \left\|W_b^1[g_0,g_1]\right\|_{L_x^\infty(\mathbb{R}_+)}\lesssim t^{-\frac{1}{4}}\|\Psi_1\|_{L^1}, t>0.
\end{equation}
On the other hand, using \eqref{Wb1rewrite} and the boundedness of Laplace transform, we have
\begin{equation}\label{wb1l2}
\begin{split}
 \left\|W_b^1[g_0,g_1]\right\|_{L_x^2(\mathbb{R}_+)}^2=\frac{1}{(2\pi)^2}\int_0^\infty\left|\int_0^\infty e^{-sx+i(s^4+s^2)t}G(is,T)ds\right|^2dx\\
\lesssim \int_0^\infty\left(\int_0^\infty e^{-sx}|G(is,T)|ds\right)^2dx\lesssim \int_0^\infty |G(is,T)|^2ds\\
=\int_{-\infty}^\infty |\widehat{\Psi_1}(s)|^2ds=\|\Psi_1\|_{L^2}^2.
\end{split}
\end{equation}
Interpolating between \eqref{wb1linfty} and \eqref{wb1l2}, we obtain
\begin{equation}\label{wb1inter}
  \left\|W_b^1[g_0,g_1]\right\|_{L_x^r(\mathbb{R}_+)} \lesssim t^{-(\frac{1}{4}-\frac{1}{2r})}\|\Psi_1\|_{L^{r'}}
\end{equation} for $r\in [2,\infty]$.

Regarding the case $\ell=2$, we first define $\Psi_2$ to be the inverse Fourier transform of $\hat{\Psi}_2$, where
\begin{equation}\label{psi2}
\hat{\Psi}_2(s)=G(s,T)\text{ for }\frac{1}{\sqrt{2}}\ge s\ge 0\text{ and }\hat{\Psi}_2(s)=0,\text{ otherwise}.
\end{equation}
Now, we can extend our limits of integral to the whole real line:
\begin{equation}\label{Wb2rewrite2}
  \begin{split}
   W_b^2[g_0,g_1](x,t)=-\frac{1}{2\pi}\int_{0}^\frac{1}{\sqrt{2}} e^{isx+i(s^4-s^2)t}\hat{\Psi}_2(s)ds\\
=-\frac{1}{2\pi}\int_{-\infty}^\infty\Psi_2(y)\int_{-\infty}^\infty e^{isx+i(s^4-s^2)t-isy}dsdy
=:\int_{-\infty}^\infty\Psi_2(y)K_2(y;x,t)dy.
\end{split}
\end{equation}
To estimate the kernel we use \cite[Theorem 1]{Artzi2000}:
\begin{thm}(\cite{Artzi2000})
	\label{thm1}
	Let $t\leq 1$ or $\abs{x}\geq t$ and consider the oscillatory integral
	\begin{equation}
		{I}(x,t)=\int_{\mathbb{R}}e^{it({s}^4- {s}^2)+ixs}ds.
	\end{equation}
	Then,
	\begin{equation}
	\abs{{I}(x,t)}\leq ct^{-\tfrac{1}{4}}\left(1+\tfrac{\abs{x}}{t^{1/4}}\right)^{-\tfrac{1}{3}}.
	\end{equation}
\end{thm}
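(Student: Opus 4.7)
The plan is to reduce the bound to a pure stationary phase analysis via two successive rescalings, with the hypotheses of the theorem translating into a single bounded-parameter stationary phase problem. First I would set $s = t^{-1/4}\sigma$, which gives
\begin{equation*}
I(x,t) = t^{-1/4}\int_{\mathbb{R}} e^{i(\sigma^4 - \lambda\sigma^2 + y\sigma)}\,d\sigma =: t^{-1/4}J(y,\lambda), \qquad \lambda := t^{1/2},\ y := x\,t^{-1/4}.
\end{equation*}
The factor $t^{-1/4}$ is already accounted for. Under the hypothesis $t \le 1$ we have $\lambda \le 1$, and under $|x| \ge t$ we have $|y| \ge \lambda^{3/2}$; in either case it suffices to prove $|J(y,\lambda)| \lesssim (1+|y|)^{-1/3}$.

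For $|y|$ of order one, I would derive $|J| \lesssim 1$ from van der Corput's lemma applied to the compact part of the integral, using that the fourth derivative of the phase is the constant $24$, together with integration by parts on the tails $|\sigma| \ge M$, where $|4\sigma^3 - 2\lambda\sigma + y| \gtrsim |\sigma|^3$ provided $\lambda \le 1$ or $|y|$ stays bounded.

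For $|y| \gg 1$, I would perform a second rescaling $\sigma = |y|^{1/3}\tau$, which concentrates the stationary points of the phase and yields
\begin{equation*}
J(y,\lambda) = |y|^{1/3}\int_{\mathbb{R}} e^{i|y|^{4/3}F(\tau)}\,d\tau, \qquad F(\tau) = \tau^4 - \mu\tau^2 + \mathrm{sgn}(y)\,\tau, \qquad \mu := \lambda|y|^{-2/3}.
\end{equation*}
Both hypothesis alternatives of the theorem collapse to $\mu \le 1$. The critical points of $F$ are the real roots of the cubic $4\tau^3 - 2\mu\tau \pm 1$ and lie in a fixed bounded set. Standard nondegenerate stationary phase on a smooth partition of unity localizing near them contributes a factor $(|y|^{4/3})^{-1/2} = |y|^{-2/3}$, while repeated integration by parts on the complement (where $|F'|\gtrsim 1$) produces a rapidly decaying remainder. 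Multiplying by the Jacobian gives $|J(y,\lambda)| \lesssim |y|^{1/3}\cdot|y|^{-2/3} = |y|^{-1/3}$.

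The main obstacle I anticipate is proving that the nondegenerate stationary phase step is uniform in $\mu \in [0,1]$: a priori, a critical point of $F$ could approach an inflection point $\pm(\mu/6)^{1/2}$ as $\mu$ varies, weakening the lower bound on $|F''|$. A direct computation shows however that simultaneous vanishing $F'(\tau) = F''(\tau) = 0$ forces $\mu = \mu_\ast := (3\sqrt{6}/4)^{2/3} > 1$; hence on $[0,1]$ the critical locus is uniformly separated from the inflection locus and $|F''|$ is bounded below at each critical point by a positive constant depending only on the distance from $\mu_\ast$. The stationary phase estimate is therefore uniform in $\mu$, which is precisely the role played by the dichotomy $t \le 1$ or $|x| \ge t$ in the hypothesis.
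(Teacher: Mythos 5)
The paper does not prove this statement: Theorem \ref{thm1} is quoted verbatim from \cite{Artzi2000} and used as a black box, so there is no internal proof to compare against. Your argument is correct and is essentially the proof in that reference: the rescalings, the translation of the dichotomy ``$t\le 1$ or $|x|\ge t$'' into the single condition $\mu=\lambda|y|^{-2/3}\le 1$, and the nondegeneracy computation are all right --- indeed $(3\sqrt{6}/4)^{2/3}=(27/8)^{1/3}=3/2>1$, and since the discriminant $128\mu^3-432$ of $F'(\tau)=4\tau^3-2\mu\tau\pm 1$ is negative for $\mu\le 1$, the phase has a single simple real critical point, uniformly nondegenerate in $\mu\in[0,1]$, which makes the stationary phase step uniform as you claim.
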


By using Theorem \ref{thm1}, we find the decay estimate for kernel $K_2(x,y,t)$:
 \begin{equation}
 |K_2(x,y,t)|\lesssim t^{-\frac{1}{4}}\left(1+\frac{\abs{x-y}}{t^{1/4}}\right)^{-1/3}.
 \end{equation}
Since the term $\left(1+\frac{\abs{x-y}}{t^{1/4}}\right)^{-1/3}\leq 1$, we get the desired estimate for the kernel:
\begin{equation}
 |K_2(x,y,t)|\lesssim t^{-\frac{1}{4}}.
\end{equation}
The above estimate implies
\begin{equation}\label{wb2linfty}
  \left\|W_b^2[g_0,g_1]\right\|_{L_x^\infty(\mathbb{R}_+)}\lesssim t^{-\frac{1}{4}}\|\Psi_2\|_{L^1}.
\end{equation}
On the other hand extending \eqref{Wb2rewrite2} to $x\in \mathbb{R}$, we obtain that
\begin{equation}\label{Wb2l2}
  \mathcal{F}(W_b^2[g_0,g_1])(s,t)= -e^{i(s^4-s^2)t}\hat{\Psi}_2(s),
\end{equation} which gives
\begin{equation}\label{wb2l2}
\begin{split}
 \left\|W_b^2[g_0,g_1]\right\|_{L_x^2(\mathbb{R}_+)}\le\left\|W_b^2[g_0,g_1]\right\|_{L_x^2(\mathbb{R})}= \|\Psi_2\|_{L^2}.
\end{split}
\end{equation}
Interpolating between \eqref{wb2linfty} and \eqref{wb2l2}, we obtain
\begin{equation}\label{wb1inter}
  \left\|W_b^2[g_0,g_1]\right\|_{L_x^r(\mathbb{R}_+)} \lesssim t^{-(\frac{1}{4}-\frac{1}{2r})}\|\Psi_2\|_{L^{r'}}
\end{equation} for $r\in [2,\infty]$.

For $\ell=5$, we have
\begin{equation}
\begin{split}
W_b^5[g_0,g_1](x,t)=\int_{\gamma_5}E(k;x,t)G(k;T)dk\\
=-\frac{1}{2\pi}\int^{-\tfrac{1}{\sqrt{2}}}_{-\infty} e^{isx+i(s^4-s^2)t}G(s,T)ds.
\label{Wb5}
\end{split}
\end{equation}
We set $\Psi_5$ to be the inverse Fourier transform of $\hat{\Psi}_5(s)$, where
\begin{equation}\label{psi5}
\hat{\Psi}_5(s):=G(s,T)\text{ for
 }s\leq - \frac{1}{\sqrt{2}}\text{ and }\hat{\Psi}_5(s)=0\text{ for }s> -\frac{1}{\sqrt{2}}.
\end{equation}
So we  can rewrite the fifth component of the \texttt{BdIntOp} in the following form:
%where $x \in \mathbb{R_{+}}$. When we extend the $\eqref{Wb5}$ to all $x$ in whole real line, we noticed that %$W_b^5[g_0,g_1](x,t)$ has the form  of solution to related Cauchy problem which is
\begin{equation} \label{Wb5rewrite}
\begin{split}
W_b^5[g_0,g_1](x,t)=-\frac{1}{2\pi}\int_{-\infty}^\infty\Psi_5(y)\int^{-\tfrac{1}{\sqrt{2}}}_{-\infty} e^{isx+i(s^4-s^2)t-isy}dsdy\\
=:\int_{-\infty}^{\infty}\Psi_5(y)K_5(x,y,t)dy.
\end{split}
\end{equation}
By similar calculations that we used for $W_b^2$, we have
\begin{equation}\label{Estc5}
|K_5(x,y,t)|\lesssim t^{-\tfrac{1}{4}}.
\end{equation}
Using $\eqref{Estc5}$ in $\eqref{Wb5rewrite}$, we have
\begin{equation}\label{Wb5Linfty}
\left\|W_b^5[g_0,g_1]\right\|_{L_x^\infty(\mathbb{R}_+)}\lesssim t^{-\tfrac{1}{4}}\|\Psi_5\|_{L^1}.
\end{equation}
On the other hand extending \eqref{Wb5rewrite} to $x\in \mathbb{R}$, we get:
\begin{equation}\label{Wb5l2}
\mathcal{F}(W_b^5[g_0,g_1])(s,t)= -e^{i(s^4-s^2)t}\hat{\Psi}_5(s),
\end{equation} which gives
\begin{equation}\label{Esw5}
\begin{split}
\left\|W_b^5[g_0,g_1]\right\|_{L_x^2(\mathbb{R}_+)}\le\left\|W_b^5[g_0,g_1]\right\|_{L_x^2(\mathbb{R})}= \|\Psi_5\|_{L^2}.
\end{split}
\end{equation}
Interpolating between $\eqref{Esw5}$ and $\eqref{Wb5Linfty}$, we obtain
\begin{equation}\label{wb5inter}
\left\|W_b^5[g_0,g_1]\right\|_{L_x^r(\mathbb{R}_+)} \lesssim t^{-(\frac{1}{4}-\frac{1}{2r})}\|\Psi_5\|_{L^{r'}}
\end{equation}for $r\in [2,\infty]$.

For $\ell=3$, we have
\begin{equation}\label{Wb3}
\begin{split}
&W_b^3[g_0,g_1](x,t)=\int_{\gamma_3}E(k;x,t)G(k;T)dk\\
&=\frac{1}{2\pi}\int_{\tfrac{1}{\sqrt{2}}}^\infty e^{isx-(s^2-\tfrac{1}{2})^{\tfrac{1}{2}} x-i(2s^2-\tfrac{1}{2})^2t}G(s+i(s^2-\tfrac{1}{2})^{\tfrac{1}{2}},T)(1+\tfrac{is}{(s^2-\tfrac{1}{2})^{\tfrac{1}{2}}})ds.
\end{split}
\end{equation}
Let $\Psi_3$ be defined as the inverse Fourier transform of
\begin{equation} \label{FTphi3}
\hat{\Psi}_3(s)=\left\{
\begin{array}{ll}
	G(s+i(s^2-\tfrac{1}{2})^{\tfrac{1}{2}},T)\Bigg(1+\tfrac{is}{(s^2-\tfrac{1}{2})^{\tfrac{1}{2}}}\Bigg), & s\ge \tfrac{1}{\sqrt{2}}\\
	0 & s< \tfrac{1}{\sqrt{2}}.
\end{array} \right.
\end{equation}
By changing the order of the integration we  can rewrite $W_b^3$ in the following form:
\begin{equation}\label{Wb3rewrite}
\begin{split}
W_b^3[g_0,g_1](x,t)&=-\frac{1}{2\pi}\int_{-\infty}^\infty\Psi_3(y)\int_{\tfrac{1}{\sqrt{2}}}^\infty e^{isx-(s^2-\tfrac{1}{2})^{\tfrac{1}{2}} x-i(2s^2-\tfrac{1}{2})^2t-isy}dsdy\\
&=:\int_{-\infty}^{\infty}K_3(y;x,t)\Psi_3(y)dy.
\end{split}
\end{equation}
where $K_3(y;x,t)$ is the kernel of $W_b^3$. Now, we can show that $K_3$ decays as ${t}^{-1/4}$ by using a similar analysis that was given for $W_b^1$. Indeed, we can write

\begin{equation}\label{DefK3}
\begin{split}
K_3(y;x,t)&=\int_{\tfrac{1}{\sqrt{2}}}^\infty e^{is(x-y)-i(4s^4-2s^2+\frac{1}{4})t-(s^2-\tfrac{1}{2})^{\tfrac{1}{2}} x}ds\\
&=\int_{\tfrac{1}{\sqrt{2}}}^\infty e^{i\phi_3(s;x,y,t)}p_3(s,x)ds
\end{split}
\end{equation}
where $\phi_3(s;x,y,t)=\theta(s)t+s(x-y)$ with $\theta(s)=-(4s^2-2s^2+\frac{1}{4})$ and

 $p_3(s,x,t)=e^{-(s^2-\tfrac{1}{2})^{\tfrac{1}{2}} x}$.

\begin{lem}\label{van2}
  Let $$I(s;\omega,t) \equiv\int_{\tfrac{1}{\sqrt{2}}}^s e^{i\xi\omega-i(4\xi^4-2\xi^2+\frac{1}{4})t}d\xi.$$ Then, $$|I(s;\omega,t)|\le ct^{-1/4},$$ where $c>0$ is independent of $\omega\in\mathbb{R},t>0$, and $s>1/\sqrt{2}$.
\end{lem}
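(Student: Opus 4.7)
The plan is to obtain the estimate via a single application of van der Corput's lemma at the level of the fourth derivative of the phase. Setting
$$\phi(\xi) = \omega\xi - (4\xi^4 - 2\xi^2 + \tfrac{1}{4})t,$$
direct differentiation gives $\phi'(\xi) = \omega - (16\xi^3 - 4\xi)t$, $\phi''(\xi) = (4 - 48\xi^2)t$, $\phi'''(\xi) = -96\xi t$, and finally $\phi^{(4)}(\xi) = -96 t$. Thus $|\phi^{(4)}|$ equals the positive constant $96t$ on the entire real line, in particular on every subinterval of $[1/\sqrt{2},\infty)$.

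The proof itself will then be a one-line invocation of the $k = 4$ form of van der Corput's lemma: if $\phi \in C^4(J)$ satisfies $|\phi^{(4)}| \ge \mu > 0$ on an interval $J$, then $\left|\int_J e^{i\phi(\xi)}\,d\xi\right| \le c_4 \mu^{-1/4}$, with $c_4$ an absolute constant independent of the length of $J$ and of any lower-order polynomial perturbations of $\phi$. Applied with $\mu = 96 t$ and $J = [1/\sqrt{2},\, s]$, this produces $|I(s;\omega,t)| \le c\, t^{-1/4}$ with $c$ independent of $\omega \in \mathbb{R}$, $t > 0$, and $s > 1/\sqrt{2}$, exactly as claimed.

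The point at which some care is needed—and the reason the $k = 4$ version is the right choice—is the small-$t$ regime. One could alternatively observe that $|\phi''(\xi)| = (48\xi^2 - 4)t \ge 20 t$ on $[1/\sqrt{2},\infty)$ and invoke the $k = 2$ version of van der Corput, but this only yields $|I(s;\omega,t)| \lesssim t^{-1/2}$, which is strictly \emph{weaker} than the desired $t^{-1/4}$ bound when $t < 1$ and is therefore insufficient for the target estimate. It is precisely the fact that $\theta(\xi) = 4\xi^4 - 2\xi^2 + 1/4$ is a quartic polynomial—so that $\theta^{(4)}$ is a nonzero constant—that allows the uniform $t^{-1/4}$ decay compatible with the dispersive rate of the biharmonic operator. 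This mirrors the argument for Lemma \ref{van} in Appendix \ref{ProofofLemma31}, whose phase $(\xi^4 + \xi^2)t - \xi y$ likewise has the constant fourth derivative $24t$.
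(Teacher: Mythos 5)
Your proof is correct, but it takes a genuinely different route from the paper's. You invoke the $k=4$ van der Corput lemma as a black box: since $\phi^{(4)}(\xi)=-96t$ identically, the bound $\bigl|\int_J e^{i\phi}\bigr|\le c_4(96t)^{-1/4}$ follows at once, with a constant depending only on the derivative order --- hence uniform in $\omega$, in $s$, and in the length of the interval $[1/\sqrt{2},s]$, which is exactly the uniformity the lemma requires. The paper instead reproves van der Corput by hand for this specific phase (deliberately, as stated in the appendix): after normalizing to $t\,\phi_{t,\omega}(\xi)$ with $\phi_{t,\omega}^{(4)}=96\ge 1$, it sets $\delta=t^{-1/4}$, peels off the subinterval where $|\phi_{t,\omega}^{(3)}|<\delta$ (bounded trivially by its length), checks that $|\phi_{t,\omega}^{(2)}|\gtrsim\delta^2$ on the remainder, and then runs the stationary-point splitting and integration by parts at the level of $\phi'$, i.e.\ it unrolls the standard inductive proof of the lemma. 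The mathematical content is the same; your version buys brevity and makes transparent why the constant is independent of $s$, while the paper's explicit decomposition keeps the argument self-contained and exhibits the concrete constants it reuses for the other oscillatory integrals (Lemma \ref{van} and the kernels $K_1$, $K_3$, $K_4$). Your side remark that the $k=2$ version only yields $t^{-1/2}$, insufficient for $t<1$, is also accurate and correctly identifies why one must work at the fourth-derivative level.
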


\begin{proof}
The proof of the above lemma is similar to the proof of Lemma \ref{van}, therefore we only mention a few  details here. Let us note that in the above integral we set $$\phi_{t,\omega}(\xi)=4\xi^4-2\xi^2+\frac{1}{4}+\xi\frac{\omega}{t}.$$ Then,
$$\phi_{t,\omega}^{(4)}(\xi)=96\ge 1, \phi_{t,\omega}^{(3)}(\xi)=96\xi, \phi_{t,\omega}^{(2)}(\xi)=48\xi^2-4, \phi_{t,\omega}^{(1)}(\xi)=16\xi^3-4\xi+\frac{\omega}{t}.$$  Again, we set $\delta=t^{-1/4}$.  We can assume without loss of generality that $s>\delta/96+1/\sqrt{2}$ because otherwise the lemma is immediate.

If $\delta/96<1/\sqrt{2}$, then $|\phi_{t,\omega}^{(3)}(\xi)|\ge \delta$ for all $\xi\in [1/\sqrt{2},s].$ Also $$|\phi_{t,\omega}^{(2)}(\xi)|\ge 20\ge \frac{10}{48^2}\delta^2$$ and Van der Corput arguments apply.

On the other hand, if $\delta/96\ge 1/\sqrt{2}$, then $|\phi_{t,\omega}^{(3)}(\xi)|< \delta$ for $\xi\in [1/\sqrt{2},\delta/96)$ and $|\phi_{t,\omega}^{(3)}(\xi)|\ge \delta$  for $\xi\in [\delta/96,s].$ Also, $$\sqrt{2}\delta/48\ge 2\Rightarrow 2\delta^2/48^2\ge 4\Rightarrow -4\le -2\delta^2/48^2.$$ Therefore, $$|\phi_{t,\omega}^{(2)}(\xi)|=48\xi^2-4\ge \delta^2/192-2\delta^2/48^2=3\delta^2/48$$ for $\xi\in [\delta/96,s]$. Hence, we can split the given integral over two regions as $[1/\sqrt{2},\delta/96)\cup [\delta/96,s]$ and finally use Van der Corput arguments in the second interval.
\end{proof}

Now, we have the following result by combining the above lemma and the behavior of exponentially decaying term.

\begin{lem}\label{Lem3.4}
	The kernel $K_3$ defined in $\eqref{DefK3}$ satisfies
	\begin{equation}
	\abs{K_3(y;x,t)}\leq ct^{-1/4},
	\end{equation}
where $t>0$, $x\in\mathbb{R}_{+}$, and $y\in\mathbb{R}$.
\end{lem}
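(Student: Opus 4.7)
The plan is to mimic the structure of the proof of Lemma \ref{k1lem} almost verbatim, with $p_3(s,x)$ playing the role of the amplitude $p(s,x)=e^{-sx}$ and with $\Phi(s;\omega,t):=I(s;\omega,t)$ from Lemma \ref{van2} playing the role of the antiderivative of the oscillatory factor (with $\omega=x-y$). The idea is that $e^{i\phi_3(s;x,y,t)} = \frac{d}{ds}\Phi(s;x-y,t)$, so one can transfer all the oscillatory content into $\Phi$ via integration by parts and then estimate using the uniform $t^{-1/4}$ bound from Lemma \ref{van2}.

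Concretely, I would first rewrite
\begin{equation*}
K_3(y;x,t)=\int_{1/\sqrt{2}}^{\infty}\Bigl[\tfrac{d}{ds}\Phi(s;x-y,t)\Bigr]\,p_3(s,x)\,ds,
\end{equation*}
then integrate by parts. The boundary term at $s=1/\sqrt{2}$ vanishes because $\Phi(1/\sqrt{2};x-y,t)=0$ by the definition of $\Phi$, and the boundary term at $s\to\infty$ vanishes because $p_3(s,x)=e^{-(s^2-1/2)^{1/2}x}\to 0$ for any fixed $x>0$ (together with the uniform boundedness of $\Phi$ from Lemma \ref{van2}). Thus
\begin{equation*}
|K_3(y;x,t)|\le \int_{1/\sqrt{2}}^{\infty}|\Phi(s;x-y,t)|\,\Bigl|\tfrac{d}{ds}p_3(s,x)\Bigr|\,ds\le c\,t^{-1/4}\int_{1/\sqrt{2}}^{\infty}\Bigl|\tfrac{d}{ds}p_3(s,x)\Bigr|\,ds,
\end{equation*}
where Lemma \ref{van2} was used to pull out the $t^{-1/4}$ factor uniformly in $s$ and $\omega=x-y$.

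It then remains to show that the total-variation integral of $p_3(\cdot,x)$ on $[1/\sqrt{2},\infty)$ is bounded by a constant independent of $x>0$. Since $p_3(s,x)=e^{-(s^2-1/2)^{1/2}x}$ is monotonically decreasing in $s$ (its $s$-derivative equals $-\frac{sx}{(s^2-1/2)^{1/2}}p_3(s,x)\le 0$), the integral of $|\partial_s p_3|$ telescopes to $p_3(1/\sqrt{2},x)-\lim_{s\to\infty}p_3(s,x)=1-0=1$. Combining the estimates gives $|K_3(y;x,t)|\lesssim t^{-1/4}$, uniformly in $y\in\mathbb{R}$ and $x\in\mathbb{R}_+$.

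The only mild subtlety I anticipate is the monotonicity/boundary-term step: one must verify that $\partial_s p_3$ is integrable near $s=1/\sqrt{2}$, where the factor $s/(s^2-1/2)^{1/2}$ blows up. This singularity is integrable since it is like $(s-1/\sqrt{2})^{-1/2}$, but the cleanest way to bypass any worry is to simply use the monotonicity of $p_3$ and the fundamental theorem of calculus as above, so that no pointwise bound on $\partial_s p_3$ is needed. Everything else is a direct transcription of the argument used for $K_1$, with Lemma \ref{van2} replacing Lemma \ref{van}.
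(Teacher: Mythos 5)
Your proof is correct and follows essentially the same route as the paper's: integrate by parts to transfer the oscillation into $\Phi=I(\cdot;x-y,t)$, invoke Lemma \ref{van2} for the uniform $t^{-1/4}$ bound, and control $\int_{1/\sqrt{2}}^{\infty}\abs{\partial_s p_3}\,ds$ by $1$. The only cosmetic difference is that the paper evaluates that last integral via the substitution $m=(s^2-\tfrac{1}{2})^{1/2}$, yielding $\int_0^\infty x e^{-mx}\,dm$, whereas you use monotonicity of $p_3$ and the fundamental theorem of calculus — the two computations are equivalent.
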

\begin{proof}
	First set $\Phi(s;\omega,t)=I(s;\omega,t)$. Then as in the proof of Lemma \ref{k1lem}, integrating by parts and using $$\lim_{s\rightarrow \infty}\abs{\Phi(s;\omega,t)p_3(s;x)}=0,$$ we obtain
	\begin{equation}
	\abs{K_3(y;x,t)}\leq \int_{\tfrac{1}{\sqrt{2}}}^\infty\abs{\Phi(s;\omega,t)}\abs{\frac{d}{ds}p_3(s;x,t)}ds.
	\end{equation}
We change variables by setting $m^2=s^2-1/2$, $m>0.$ Then the result follows by Lemma \ref{van2} with $\omega=x-y$ and the following uniform estimate
\begin{equation}\label{Esp3}
\begin{split}
\int_{\tfrac{1}{\sqrt{2}}}^\infty\abs{\frac{d}{ds}p_3(s;x,t)}ds=\int_{0}^{\infty}xe^{-mx}dm<1, x>0.
\end{split}
\end{equation}
\end{proof}
The above lemma gives
\begin{equation}\label{Wb3Linfty}
\norm{W_b^3[g_0,g_1]}_{L_x^\infty(\mathbb{R}_+)}\lesssim t^{-\tfrac{1}{4}}\|\Psi_3\|_{L^1}.
\end{equation}
On the other hand, using $\eqref{Wb3rewrite}$ we have
\begin{equation}\label{wb3l2}
\begin{split}
&\left|W_b^3[g_0,g_1](x,t)\right|^2_{L_x^2(\mathbb{R}_{+})}=\\
&=\frac{1}{(2\pi)^2}\int_{0}^{\infty}\left|\int_{\tfrac{1}{\sqrt{2}}}^\infty e^{isx-(s^2-\tfrac{1}{2})^{\tfrac{1}{2}} x-i(2s^2-\tfrac{1}{2})^2t}G(s+i(s^2-\tfrac{1}{2})^{\tfrac{1}{2}},T)(1+\tfrac{is}{(s^2-\tfrac{1}{2})^{\tfrac{1}{2}}})ds\right|^2dx.\\
&\lesssim \int_0^\infty\left(\int_{\tfrac{1}{\sqrt{2}}}^\infty e^{-(s^2-\tfrac{1}{2})^{\tfrac{1}{2}}x}|G(s+i(s^2-\tfrac{1}{2})^{\tfrac{1}{2}},T)|\left|\tfrac{2s^2-\tfrac{1}{2}}{s^2-\tfrac{1}{2}}\right|^{\tfrac{1}{2}} ds\right)^2dx
\end{split}
\end{equation}
After change of variables and using the boundedness of Laplace transform, we have
\begin{equation}
\begin{split}
&\left|W_b^3[g_0,g_1](x,t)\right|^2_{L_x^2(\mathbb{R}_{+})}\\
&\lesssim \int_0^\infty\left(\int_{0}^\infty e^{-mx}|G((m^2+\tfrac{1}{2})^{\tfrac{1}{2}}+im,T)|\left|\tfrac{(2m^2+\tfrac{1}{2})}{(m^2)}\right|^{\tfrac{1}{2}}dm\right)^2dx\\
&\lesssim \int_{-\infty}^\infty |\widehat{\Psi}_3(s)|^2ds=\|\Psi_3\|_{L^2}^2.\label{Esw3}
\end{split}
\end{equation}

Interpolating between $\eqref{Esw3}$ and $\eqref{Wb3Linfty}$, we obtain
\begin{equation}\label{wb3inter}
\left\|W_b^3[g_0,g_1]\right\|_{L_x^r(\mathbb{R}_+)} \lesssim t^{-(\frac{1}{4}-\frac{1}{2r})}\|\Psi_3\|_{L^{r'}}.
\end{equation}for $r\in [2,\infty]$.

The case $\ell=4$ is similar to that of $W_b^3$.  We have

\begin{equation}\label{Wb4}
\begin{split}
&W_b^4[g_0,g_1](x,t)=\int_{\gamma_4}E(k;x,t)G(k;T)dk\\
&=-\frac{1}{2\pi}\int_{\tfrac{1}{\sqrt{2}}}^\infty e^{-isx-(s^2-\tfrac{1}{2})^{\tfrac{1}{2}} x-i(2s^2-\tfrac{1}{2})^2t}G(-s+i(s^2-\tfrac{1}{2})^{\tfrac{1}{2}},T)(-1+\tfrac{is}{(s^2-\tfrac{1}{2})^{\tfrac{1}{2}}})ds.
\end{split}
\end{equation}
Let $\Psi_4$ be the inverse Fourier transform of
\begin{equation}\label{FTphi4}
\hat{\Psi}_4(s)=\left\{
\begin{array}{ll}
G(-s+i(s^2-\tfrac{1}{2})^{\tfrac{1}{2}},T)\Bigg(-1+\tfrac{is}{(s^2-\tfrac{1}{2})^{\tfrac{1}{2}}}\Bigg), & s\ge \frac{1}{\sqrt{2}}\\
0, & s< \frac{1}{\sqrt{2}}.
\end{array}\right.
\end{equation}

Then, $W_b^4$ takes the following form:
\begin{equation}\label{Wb4rewrite}
\begin{split}
W_b^4[g_0,g_1](x,t)&=-\frac{1}{2\pi}\int_{-\infty}^\infty\Psi_4(y)\int_{\tfrac{1}{\sqrt{2}}}^\infty e^{-isx-(s^2-\tfrac{1}{2})^{\tfrac{1}{2}} x-i(2s^2-\tfrac{1}{2})^2t-isy}dsdy\\
&=:\int_{-\infty}^{\infty}K_4(y;x,t)\Psi_4(y)dy.
\end{split}
\end{equation}

where $K_4(x,y,t)$ is the kernel of $W_b^4$. We can deduce that $K_4$ decays as ${t}^{-1/4}$ by arguing as in the case of $W_b^3$ because we can write
\begin{equation}\label{DefK4}
\begin{split}
K_4(y;x,t)&=\int_{1/\sqrt{2}}^\infty e^{-is(x+y)-i(4s^2-2s^2+\frac{1}{4})t}e^{-(s^2-\tfrac{1}{2})^{\tfrac{1}{2}} x}ds\\
&=\int_{1/\sqrt{2}} e^{i\phi_4(s;x,y,t)}p_4(s;x)ds
\end{split}
\end{equation}
where $\phi_4(s;x,y,t)=\theta(s)t-s(x+y)$ with $\theta(s)=-(4s^2-2s^2+\frac{1}{4})$ and

$p(s;x)=e^{-(s^2-\tfrac{1}{2})^{\tfrac{1}{2}} x}$.

Therefore, we have
	\begin{equation}
	\abs{K_4(y;x,t)}\leq ct^{-1/4},
	\end{equation}
	where $t\neq 0$, $x\in\mathbb{R}_{+}$, and $y\in\mathbb{R}$. This implies
\begin{equation}\label{Wb4Linfty}
\left\|W_b^4[g_0,g_1]\right\|_{L_x^\infty(\mathbb{R}_+)}\lesssim t^{-\tfrac{1}{4}}\|\Psi_4\|_{L^1}.
\end{equation}
Again, from the boundedness of the Laplace transform we have
\begin{equation}\label{Esw4}
\left\|W_b^4[g_0,g_1]\right\|_{L_x^2(\mathbb{R}_+)}\lesssim \|\Psi_4\|_{L^2}.
\end{equation}
Interpolating between $\eqref{Esw4}$ and $\eqref{Wb4Linfty}$, we obtain
\begin{equation}\label{wb4inter}
\left\|W_b^4[g_0,g_1]\right\|_{L_x^r(\mathbb{R}_+)} \lesssim t^{-(\frac{1}{4}-\frac{1}{2r})}\|\Psi_4\|_{L^{r'}}.
\end{equation}for $r\in [2,\infty]$.
%%%%%%%%%%%%%

\section{Towards the general case}

In what follows we present a general form of the integral boundary operator \eqref{bdintop2}, namely the \texttt{BdIntOp} for the problem \eqref{maineq}-\eqref{bdry}, with the linear operator $P$ be defined by the general form
$$P=-i(\alpha\partial_x^4+\beta\partial_x^2), \qquad  \beta\in\mathbb{R}, \ \alpha\in\mathbb{R}, \ \ \alpha\ne 0.$$
We note that the case $\alpha=\beta=1$ was discussed earlier in this paper and the case $\alpha=1, \ \beta=0$ was analysed in \cite{OY19}.

For the general problem the global relation \eqref{IntythatktRe} takes the form
\begin{multline}\label{IntythatktRe-g}
e^{w(k)t}\hat{y}(k,t)=-i\alpha\tilde{g}_3(w(k),t)+\alpha k\tilde{g}_2(w(k),t)\\
-i(\beta-\alpha k^2)\tilde{g}_1(w(k),t)+k(\beta-\alpha k^2)\tilde{g}_0(w(k),t), \quad \Im\,k\le 0,
\end{multline}
with
\begin{equation}\label{def:w-g}
w(k)=-i(\alpha k^4-\beta k^2).
\end{equation}
Furthermore, the integral representation of the solution takes the form \eqref{qxt} with $\tilde{g}$ being defined as
$$\tilde{g}=i\alpha\tilde{g}_3- \alpha k\tilde{g}_2+(\beta-\alpha k^2)(i\tilde{g}_1-k\tilde{g}_0).$$

Following the same arguments as in section \ref{seccon} we are able to derive the general boundary integral operator
\begin{equation}\label{bdintop-g}
  W_b^g[g_0,g_1](x,t) =\alpha \int_{\partial D^+}E(k;x,t)G(k;t)dk,
\end{equation}
where $E$ is defined in \eqref{def:E}, $G$ is defined in \eqref{def:G}. Therein $w(k)$ given by \eqref{def:w-g} and $\nu(k)$ is defined follows \begin{equation}\label{nukmod-g}
\nu(k)=\sqrt{\frac{\beta}{\alpha}-k^2}^*,
\end{equation}
where we set $\displaystyle \sqrt{z}^*:=|z|^{\frac{1}{2}}e^{i\frac{\arg z}{2}}$ and  for some fixed and sufficiently small $\epsilon>0$ we choose:
\begin{itemize}
\item $\arg z\in [-\pi +\epsilon,\pi+\epsilon)$, for $\alpha>0,\ \beta>0$. Then $\nu(k)$ is analytic on $\overline{D^+}\setminus \left\{-\sqrt{\frac{\beta}{\alpha}}\right\}$ (See Figure \ref{Branchcut-g1}).
\begin{figure}[!htb]
	\centering
	\includegraphics[width=0.7\textwidth]{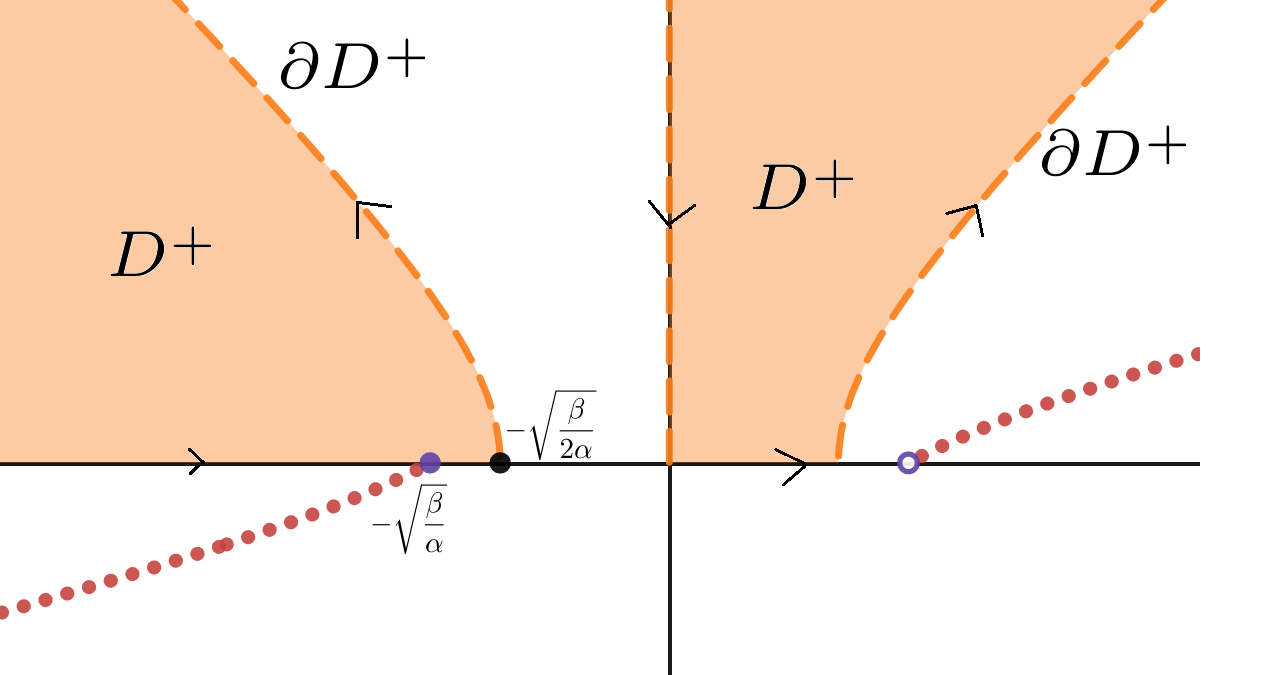}
	\caption{Red path denotes the branch cut of $\nu(k)=\sqrt{\frac{\beta}{\alpha}-k^2}^*$ for $\alpha>0,\ \beta>0$.}\label{Branchcut-g1}
\end{figure}
\item $\arg z\in [\epsilon,2\pi+\epsilon)$, for $\alpha>0,\ \beta<0$.  Then $\nu(k)$ is analytic on $\overline{D^+}\setminus \left\{ i\sqrt{-\frac{\beta}{\alpha}}\right\}$ (See Figure \ref{Branchcut-g2}).
\begin{figure}[!htb]
	\centering
	\includegraphics[width=0.5\textwidth]{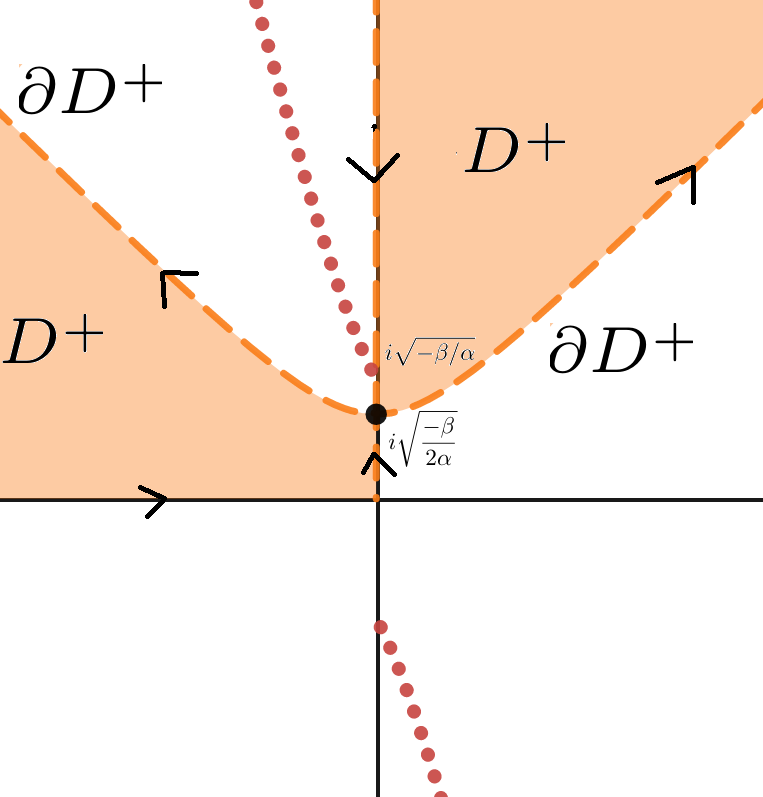}
	\caption{Red path denotes the branch cut of $\nu(k)=\sqrt{\frac{\beta}{\alpha}-k^2}^*$ for $\alpha>0,\ \beta<0$.}\label{Branchcut-g2}
\end{figure}
\item $\arg z\in [-\pi- \epsilon, \pi-\epsilon)$, for $\alpha<0,\ \beta<0$. Then $\nu(k)$ is analytic on $\overline{D^+}\setminus \left\{\sqrt{\frac{\beta}{\alpha}}\right\}$ (See Figure \ref{Branchcut-g3}).
\begin{figure}[!htb]
	\centering
	\includegraphics[width=0.5\textwidth]{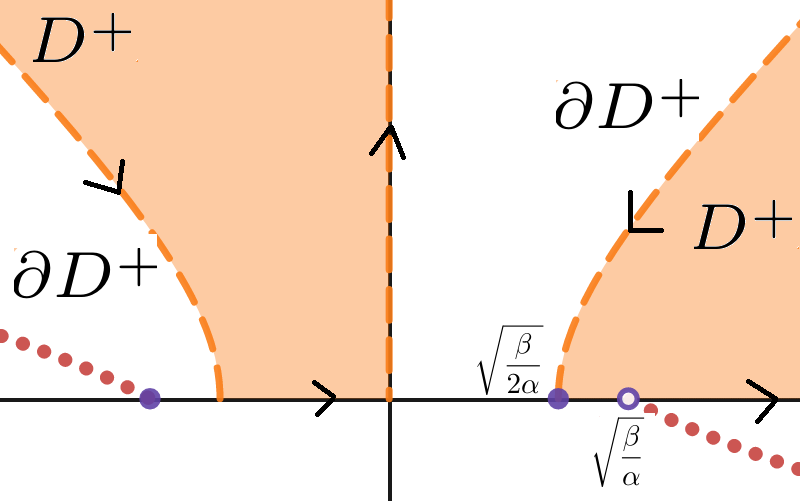}
	\caption{Red path denotes the branch cut of $\nu(k)=\sqrt{\frac{\beta}{\alpha}-k^2}^*$ for $\alpha<0,\ \beta<0$.}\label{Branchcut-g3}
\end{figure}
\item $\arg z\in [-\epsilon,2\pi-\epsilon)$, for $\alpha<0,\ \beta>0$. Then $\nu(k)$ is analytic on $\overline{D^+}\setminus \left\{i\sqrt{-\frac{\beta}{\alpha}}\right\}$ (See Figure \ref{Branchcut-g4}).
\begin{figure}[!htb]
	\centering
	\includegraphics[width=0.5\textwidth]{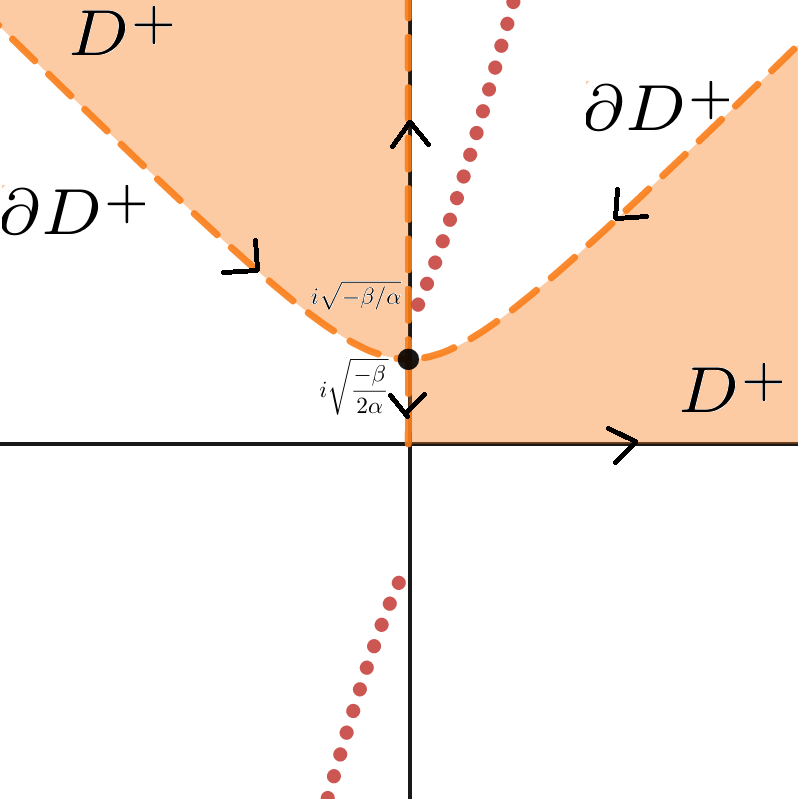}
	\caption{Red path denotes the branch cut of $\nu(k)=\sqrt{\frac{\beta}{\alpha}-k^2}^*$ for $\alpha<0,\ \beta>0$.}\label{Branchcut-g4}
\end{figure}
\end{itemize}

For the case that $\alpha=1$ and $\beta=0$ the \texttt{BdIntOp} \eqref{bdintop-g} simplifies to \eqref{ysol}, since \eqref{nukmod-g} takes the form
$$\nu(k)=\begin{cases} ik, & k \in  D_1^+=\left\{ k:\arg k\in \left(\frac{\pi}{4},\frac{\pi}{2} \right) \right\} \\ -ik, & k \in D_2^+ =\left\{ k: \arg k\in  \left(\frac{3\pi}{4},\pi\right)\right\} \end{cases}.$$

% ----------------------------------------------------------------

\appendix
\label{appendix}
\section{Proof of Lemma \ref{van}}
\label{ProofofLemma31}
\begin{proof}We can write
	$$I(s;y,t) \equiv \int_{0}^{s}e^{i(\xi^4+\xi^2)t-i\xi y}d\xi=\int_{0}^{s}e^{it\left(\xi^4+\epsilon \xi^2-\frac{\xi y}{t}\right)}d\xi.$$
	Set $\phi_{t,y}(\xi)\equiv \xi^4+\xi^2-\frac{\xi y}{t}.$ Then, $\phi_{t,y}^{(4)}(\xi)=24\ge 1$. We can use the steps of the proof of Van der Corput lemma and prove that   $$|I(s;y,t)|\lesssim t^{-1/4}, t>0,$$ where the constant of the inequality is independent of $\phi_{t,y}$, $y$, $t$, and $s$.  For completeness we give the details because we refer to the content of this lemma for other oscillatory integrals later.
	
	Indeed, we first set $\delta\equiv t^{-1/4}$. Then, we have  $\phi_{t,y}^{(3)}(\xi)=24\xi< \delta$ if $\xi\in [0,\delta/24).$ Therefore, we can write $$I(s;y,t)=\int_{0}^{\delta/24}e^{i(\xi^4+\xi^2)t-i\xi y}d\xi+\int_{\delta/24}^{s}e^{i(\xi^4+ \xi^2)t-i\xi y}d\xi\equiv A+B.$$ Clearly, $|A|\le \frac{\delta}{24}= \frac{t^{-1/4}}{24}.$ We are assuming without loss of generality that $s>\delta/24$, otherwise the result of the lemma is immediate.
	
	Now, we will estimate $B.$ First observe that $\phi_{t,y}''(\xi)=12\xi^2+2\ge \frac{\delta^2}{48}+2$ on $[\delta/24,s].$ In particular, $\phi_{t,y}'$ is  monotone on $[\delta/24,s].$ Now, we define $m_{t,y}$ be such that $$|\phi'(m_{t,y})|= \inf_{\xi\in [\delta/24,s]}|\phi_{t,y}'(\xi)|.$$ There is only one such point in $[\delta/24,s]$ due to monotonicity of $\phi_{t,y}'$. Note that $\phi_{t,y}'(\xi)=4\xi^3+2\xi-\frac{y}{t}$. Only three cases are possible:
	\begin{itemize}
		\item[(i)] The first case is $m_{t,y}\in [\delta/24,s]$ and $\phi_{t,y}'(m_{t,y})=0$. If $\xi\notin (m_{t,y}-\delta,m_{t,y}+\delta)$, we have $$|\phi_{t,y}'(\xi)|=\left|\int_{m_{t,y}}^{\xi}\phi_{t,y}''(\xi)d\xi\right|\ge \left(\frac{\delta^2}{48}+2\right)|\xi-m_{t,y}|\ge \frac{\delta^3}{48}+2\delta.$$  We write, $$B=\int_{\delta/24}^{s}\cdot=\int_{\delta/24}^{m_{t,y}-\delta}\cdot+\int_{m_{t,y}-\delta}^{m_{t,y}+\delta}\cdot+\int_{m_{t,y}+\delta}^{s}\cdot\equiv \sum_{i=1}^{3}B_i.$$ Clearly, $|B_2|\le 2\delta=2t^{-1/4}.$ Let us estimate $B_1$. We integrate by parts, use the monotonicity of $\phi_{t,y},$ Fundamental Theorem of Calculus and obtain
		\begin{equation}
			\begin{split}
				|B_1|&=\left|\int_{\delta/24}^{m_{t,y}-\delta}e^{it\phi_{t,y}(\xi)}d\xi\right|\\
				&\le\left|\frac{e^{it\phi_{t,y}(\xi)}}{it\phi_{t,y}'(\xi)}\right|_{\delta/24}^{m_{t,y}}+\int_{\delta/24}^{m_{t,y}-\delta}\left|\frac{d}{d\xi}\left(\frac{1}{it\phi_{t,y}'(\xi)}\right)\right|d\xi\\
				&\le \frac{2}{t}\left(\frac{\delta^3}{48}+2\delta\right)^{-1}+\frac{1}{t}\left|\int_{\delta/24}^{m_{t,y}-\delta}\frac{d}{d\xi}\left(\frac{1}{\phi_{t,y}'(\xi)}\right)d\xi\right|\\
				&\le \frac{4}{t}\left(\frac{\delta^3}{48}+2\delta\right)^{-1}\le 192t^{-1/4}.
			\end{split}
		\end{equation}
		$|B_3|$ is estimated in the same manner and we can find the same bound for it.
		\item[(ii)] Consider the case $\phi_{t,y}'(m_{t,y})\neq 0$ and $m_{t,y}=\delta/24$. In this case, we decompose as
		$$B=\int_{\delta/24}^{s}\cdot=\int_{\delta/24}^{m_{t,y}+\delta}\cdot+\int_{m_{t,y}+\delta}^{s}\cdot\equiv C_1+C_2,$$ where $|C_1|\le \delta$ and $|C_2|\le 192t^{-1/4}$ by the same arguments in (i).
		\item[(iii)] Consider the case $\phi_{t,y}'(m_{t,y})\neq 0$ and $m_{t,y}=s$. In this case, we decompose as
		$$B=\int_{\delta/24}^{s}\cdot=\int_{\delta/24}^{m_{t,y}-\delta}\cdot+\int_{m_{t,y}-\delta}^{s}\cdot\equiv D_1+D_2,$$ where $|D_1|\le 192t^{-1/4}$ and $|D_2|\le \delta$ by the same arguments in (i) and (ii).
		By the three cases above, the lemma follows.
	\end{itemize}
\end{proof}
 \bibliographystyle{plain}
\bibliography{myrefs}
\end{document}